\newtheorem{theorem}{Theorem}[section]
\newtheorem{lemma}[theorem]{Lemma}
\newtheorem{proposition}[theorem]{Proposition}
\theoremstyle{definition}
\newtheorem{example}[theorem]{Example}
\def\N{\mathbb N}
\newcommand{\R}{\mathbb{R}}
\newcommand{\al}{\alpha}
\newcommand{\NN}{\mathbb N}
\newcommand{\RR}{\mathbb R}
 \newcommand{\bea}{\begin{eqnarray}}
 \newcommand{\eea}{\end{eqnarray}}
 \newcommand{\beas}{\begin{eqnarray*}}
 \newcommand{\eeas}{\end{eqnarray*}}
\theoremstyle{remark}
\newtheorem{remark}[theorem]{Remark}
\numberwithin{equation}{section}
\theoremstyle{remark}
\title[New Zemanian Type Spaces and the Quasiasymptotics for the FrHT]{New Zemanian Type Spaces and the Quasiasymptotics for the Fractional Hankel Transform}
\author[S. Atanasova]{Sanja Atanasova}
\address{Faculty of Electrical Engineering and Information Technologies, Ss. Cyril and Methodius University\\  Rugjer Boshkovik 18\\
1000 Skopje\\ North Macedonia}
\email{ksanja@feit.ukim.edu.mk}
\author[S. Jak\v{s}i\'{c}]{Smiljana Jak\v{s}i\'{c}}
\address{Faculty of Foresty, University of Belgrade\\  Kneza Vi\v{s}eslava 1, \\ 11030 Belgrade \\ Serbia}
\email{smiljana.jaksic@sfb.bg.ac.rs}
\author[S. Maksimovi\'{c}]{Snje\v{z}ana Maksimovi\'{c}}
\address{Faculty of Architecture, Civil Engineering and Geodesy, University of Banja Luka\\ Bulevar vojvode Petra Bojovi\'{c}a 1A \\ 78000 Banja Luka\\ Bosnia and Hercegovina} \email{snjezana.maksimovic@aggf.unibl.org}
\author[S. Pilipovi\'{c}]{Stevan Pilipovi\'{c}}
\address{Faculty of Sciences and Mathematics, University of Novi Sad\\ Trg D. Obradovica 4\\ 21000 Novi Sad\\ Serbia}
\email{pilipovic@dmi.uns.ac.rs}
\begin{document}

\begin{abstract}
We present an Abelian  theorem for the fractional Hankel transform (FrHT) on the Montel space $\mathcal{K}_{-1/2}(\RR_+)$, which is designed to overcome the limitations of the Zemanian space $\mathcal K^{\mu}(\R_+)$. The essential part of the paper is a new construction of the basic space $\mathcal{K}_{-1/2}(\RR_+)$, ensuring Montel space properties that guarantee desirable topological features such as the equivalence of weak and strong convergence. Also, we prove the continuity of the FrHT on this Montel space, both in function and distribution settings. The paper concludes with a Tauberian theorem that provides the converse implication showing that under appropriate growth and limit conditions the asymptotic behavior of the original distribution can be deduced from that of its FrHT. For this purpose a new space $\mathcal B_{-1/2}(\RR_+)$ is construct.
\end{abstract}

\keywords{Fractional Hankel transform, distributions, Abelian and Tauberian theorems}


\subjclass[2020]{40E05, 46F05, 46F12, 47B35}
\maketitle

\section{Introduction}

This paper continues the line of research initiated in our previous work \cite{AJMP}, where we investigated Abelian and Tauberian type theorems for the fractional Hankel transform (FrHT) in the framework of Zemanian distribution spaces. In that setting, we established a foundational understanding of the relationship between the quasiasymptotic behavior of distributions and the behavior of their FrHTs. However, a central open question remained: whether the quasiasymptotic behavior of a distribution $f$ necessarily implies the quasiasymptotic behavior of its image under the transform, $H^\alpha_{\mu_0}f$ for $\mu_0 \in \mathbb{N}_0$.

Our aim in this paper is to resolve this question by developing a refined functional-analytic framework in which such an implication can be  established. To overcome the  limitations of the classical Zemanian spaces $\mathcal{K}^\mu(\mathbb{R}_+)$—most notably, the lack of equivalence between weak and strong convergence—we construct a new sequence of spaces $\mathcal{K}_{-1/2,\mu}(\mathbb{R}_+)$, indexed by $\mu \in \mathbb{N}_0$, equipped with appropriate norms. The projective limit of this sequence, denoted $\mathcal{K}_{-1/2}(\mathbb{R}_+)$, is shown to be a Montel space, providing the compactness and reflexivity properties necessary for our analysis.

We recall that the fractional Hankel transform generalizes the classical Hankel transform by replacing the standard Bessel kernel with one of fractional order. This idea, introduced by Namias and developed further by Kerr \cite{5, 55} and others \cite{Zem, Zem1, Betancor, Betancor1, Hankel1, Ridenhour, She}, has proven particularly useful in the study of wave propagation, optics, and pseudo-differential operators \cite{Pras}.

In this paper, we define the FrHT $H^\alpha_{\mu_0}$ on the dual space $\mathcal{K}'_{-1/2}(\mathbb{R}_+)$ and prove an Abelian theorem which confirms that, in this refined setting, the quasiasymptotic behavior of a distribution indeed implies the corresponding behavior of its FrHT. This is in sharp contrast to the setting of $(\mathcal{K}^\mu(\mathbb{R}_+))'$, where such a result could not be obtained due to topological obstructions. Importantly, our approach ensures that the results hold uniformly for all $\mu_0 \in \mathbb{N}_0$. Additionally, we establish a Tauberian theorem by introducing a new space $\mathcal{B}_{-1/2}(\mathbb{R}_+)$, which serves as a natural generalization of the Zemanian space $\mathcal{B}^\mu(\mathbb{R}_+)$.

It is worth noting that Zemanian-type Zemanian type spaces
 $(\mathcal K^\mu(\R_+))'$ and $(\mathcal K^\nu(\R_+))',$ $ \mu, \nu \geq -1/2,$ are comparable only if $|\mu-\nu|=2n, n\in\N$. Because of that, the construction of
$\mathcal K_{-1/2}(\R_+)$ is essential for the analysis of this paper.
We also prove a Tauberian type result, introducing the additional space $\mathcal B_{-1/2}(\R_+)$ which is a generalization of the corresponding Zemanian space $\mathcal B^\mu(\R_+).$

\section{Space $\mathcal{K}_{-1/2}(\RR_+)$}\label{Sec prostor K}

Let $m,k\in\N_0$, and $\mu\geq-1/2$. Recall the definition of the Zemanian space \cite{Zem}: A smooth and complex-valued function
 $\varphi(x)$ on $\RR_+=(0,+\infty)$ belongs to the Zemanian  space $\mathcal{K}_{m,k}^{\mu}(\RR_+)$ if
$$\gamma_{m,k}^\mu(\varphi)=\sup_{x\in \RR_+}\Big|x^m(x^{-1}D)^k\big(x^{-\mu-\frac{1}{2}}\varphi(x)\big)\Big|<\infty.$$
The locally convex space  $\mathcal{K}^{\mu}(\RR_+)$ is the intersection of $\mathcal{K}_{m,k}^{\mu}(\RR_+)$:
$$\mathcal{K}^{\mu}(\RR_+)=\bigcap_{m,k\in\N_0}\mathcal{K}_{m,k}^{\mu}(\RR_+).$$
(Note that Zemanian denote this space with $\mathcal K_\mu(\R_+).$ We put the index $\mu$ as a super-index in order to exclude the confusion with the notation $\mathcal K_{-1/2}(\R_+)$ which will be given below.)
A topology of $\mathcal{K}^{\mu}(\RR_+)$ is assigned by taking $\{\gamma_{m,k}^\mu\}_{m,k=0}^\infty$ as its seminorms.
With
$$\gamma_r^\mu(\varphi)=\max_{0\leq m \leq r \atop 0\leq k\leq r}\gamma_{m,k}^\mu(\varphi),$$
one obtains norms in $\mathcal{K}^{\mu}(\RR_+)$ which generate the same topology (see \cite[(8), p.133]{Zem}). Zemanian showed that $\mathcal{K}^{\mu}(\RR_+)$ is complete, and therefore a Fr\'{e}chet space. The space $\mathcal{D}(\RR_+)$ is a subspace of $\mathcal{K}^{\mu}(\RR_+)$, it is not dense in $\mathcal{K}^{\mu}(\RR_+)$ but $\mathcal{K}^{\mu}(\RR_+)$ is a dense subspace of $\mathcal{E}(\RR_+)$.

The dual space $(\mathcal K^\mu(\RR_+))'$ consists of distribution with slow growth and it is also complete space.

We point out that the Zemanian space $\mathcal{K}^\mu(\RR_+)$ is not a Montel space and provide an example to illustrate this.
\begin{example}
We denote by $\mathcal C_0(\RR_+)$  the space of continuous functions with compact support. Let $n\in\NN$. We define the function  $\delta_{n}(t)=(8n)\theta((8n)t)$, where $t\in\R$ and $\theta$ is an even, non-negative function supported by $[-1/2,1/2]$. Furthermore, let $\{\phi_n\}_{n=0}^\infty$ be the sequence of functions from $\mathcal C_0(\RR_+)$ such that
\begin{equation}
 \phi_n(t)=\begin{cases}
 0, & t\in(0,\frac{1}{2n}),\\
 t^{2n}, & t\in(\frac{1}{2n},1-\frac{1}{4n}), \\
 1, & t\in(1-\frac{1}{4n},3)
 \end{cases}
   \end{equation}
Then the sequence
 $$ \kappa_n=\delta_n*(t^{\mu+1/2}\phi_n)$$
is bounded in $\mathcal K^\mu(\RR_+)$, but it does not have a convergent subsequence in
$\mathcal K^\mu(\RR_+)$.
\end{example}

The next step is to introduce new seminorms that define a Montel space suitable for describing the Abelian  result discussed in Section \ref{Sec Abelian and Tauberian-type results}.

For each $\mu\in\NN_0$, we define the space $\mathcal{K}_{-1/2,\mu}(\RR_+)$ of smooth, complex-valued functions $\varphi(x)$ on $\RR_+$ such that for each pair of $m,k\in\NN_0$
\begin{equation}\label{2dana}
\beta^\mu(\varphi)=\sup_{0\leq m\leq2\mu \atop 0\leq k\leq2\mu}\beta^\mu_{m,k}(\varphi)<\infty,
\end{equation}
where
\begin{equation}\label{1dan}
\beta^\mu_{m,k}(\varphi)=\gamma^{2\mu-1/2}_{m,k}(\varphi)+\gamma^{2\mu+1/2}_{m,k}(\varphi).
\end{equation}

Now, we need to verify  that
\begin{equation}\label{3dana}
\beta^\mu(\varphi)\leq\beta^{\mu+1}(\varphi), \;\mu\in\N_0, \;\varphi\in\mathcal K_{-1/2,\mu}(\RR_+).
\end{equation}
By (\cite[Lemma 1]{Zem1}), there holds
\begin{equation}\label{nej1}
\gamma^{2\mu - 1/2}_{2\mu,2\mu}(\varphi)\leq 4\mu\gamma^{2(\mu+1)-1/2}_{2\mu,2\mu-1}(\varphi)+\gamma^{2(\mu+1)-1/2}_{2\mu+2,2\mu}(\varphi)
\end{equation}
and
\begin{equation}\label{nej2}
\gamma^{2\mu + 1/2}_{2\mu,2\mu}(\varphi)\leq 4\mu\gamma^{2(\mu+1)+1/2}_{2\mu,2\mu-1}(\varphi)+\gamma^{2(\mu+1)+1/2}_{2\mu+2,2\mu}(\varphi).
\end{equation}
Using the inequalities (\ref{nej1}) and (\ref{nej2}), we obtain
$$ \beta^\mu_{2\mu,2\mu}(\varphi)\leq 4\mu\beta^{\mu+1}_{2(\mu+1),2(\mu+1)}(\varphi)+\beta^{\mu+1}_{2(\mu+1),2(\mu+1)}(\varphi)\leq(4\mu+1)\beta^{\mu+1}_{2(\mu+1),2(\mu+1)}(\varphi).
$$
Hence, (\ref{3dana}) follows.

Finally, the locally convex space $\mathcal{K}_{-1/2}(\RR_+)$ is defined to be the intersection of $\mathcal{K}_{-1/2,\mu}(\RR_+)$, $\mu\in\NN_0$:
$$ \mathcal{K}_{-1/2}(\R_+)=\bigcap_{\mu\in\N_0}\mathcal{K}_{-1/2,\mu}(\RR_+)=\{\phi\in
C^\infty(\R_+): \beta^\mu(\phi)<\infty, \mu\in\N_0\}.$$
The space $\mathcal{K}_{-1/2}(\R_+)$ is endowed with the projective limit topology.  In view of (\ref{2dana}) and (\ref{1dan}), the  inclusion mappings
$$\mathcal K_{-1/2}(\RR_+)\rightarrow \mathcal K_{2\mu,2\mu}^{2\mu-1/2}(\RR_+)\;\mbox{ and }\;\mathcal K_{-1/2}(\RR_+)\rightarrow \mathcal K_{2\mu,2\mu}^{2\mu+1/2}(\RR_+),  \ \ \mu\in\N_0, \;$$
\mbox{are continuous}.
The properties of $\mathcal K_{-1/2}(\RR_+)$ follow from the corresponding ones for the Zemanian spaces;  $\mathcal{D}(\RR_+)\subset \mathcal{K}_{-1/2}(\RR_+)$ and the elements of $\mathcal{K}_{-1/2}(\RR_+)$ rapidly decrease to zero as $x\rightarrow+\infty$. Furthermore, $\mathcal{K}_{-1/2}(\RR_+)$ is a dense subspace of $\mathcal{E}(\RR_+)$ and $\mathcal{E}'(\RR_+)\subset \mathcal{K}_{-1/2}'(\RR_+)$ ($\mathcal{K}_{-1/2}'(\RR_+)$ is the dual space of $\mathcal{K}_{-1/2}(\RR_+)$). The $\mathcal{K}_{-1/2}'(\RR_+)$  is equipped with the strong dual topology.
For every $f\in \mathcal K_{-1/2}'(\RR_+)$ there exist a positive constant $C$ and a non-negative integer $\mu$ such that
\begin{equation}\label{cous}
|\langle f,\varphi\rangle|\leq C\beta^\mu(\varphi),\quad\mbox{for all}\quad \varphi\in\mathcal{K}_{-1/2}(\RR_+).
\end{equation}

The proof of the following proposition relies on the completeness of the Zemanian space $\mathcal{K}^{\mu}(\RR_+)$.
\begin{proposition}
$\mathcal{K}_{-1/2}(\RR_+)$  is a Fr\'{e}chet space.
\end{proposition}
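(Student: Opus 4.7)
The plan is to verify the three defining properties of a Fr\'echet space, with only sequential completeness requiring genuine work. Metrizability, local convexity, and the Hausdorff property will follow at once from the definition of $\mathcal K_{-1/2}(\R_+)$ as a countable projective limit of the seminormed spaces $(\mathcal K_{-1/2,\mu},\beta^\mu)$, $\mu\in\N_0$; note that $\beta^0$ dominates $\gamma^{-1/2}_{0,0}(\varphi)=\sup_x|\varphi(x)|$, which is already a norm, so the projective topology is Hausdorff. Thus I take a Cauchy sequence $\{\varphi_n\}\subset\mathcal K_{-1/2}(\R_+)$ and aim to produce its limit.

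By the definition of $\beta^\mu$, the sequence $\{\varphi_n\}$ is automatically Cauchy with respect to each $\gamma^{2\mu-1/2}_{m,k}$ and $\gamma^{2\mu+1/2}_{m,k}$ with $\mu\in\N_0$ and $m,k\le 2\mu$. On an arbitrary compact $K\subset\R_+$ the weights $x^{\pm(\mu+1/2)}$ and $x^m$ are bounded between positive constants, and a routine inductive expansion expresses each classical derivative $D^j$ as a bounded linear combination of the operators $(x^{-1}D)^i$ with $i\le j$. Letting $\mu$ (and hence the admissible range of $k$) grow will therefore yield Cauchyness of $\{\varphi_n\}$ in the Fr\'echet space $\EE(\R_+)=C^\infty(\R_+)$; completeness of $\EE(\R_+)$ then produces some $\varphi\in C^\infty(\R_+)$ with $\varphi_n\to\varphi$ together with all derivatives, uniformly on compacta, which already identifies the candidate limit.

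The last step is to upgrade this to convergence in $\mathcal K_{-1/2}(\R_+)$ and to show $\varphi$ lies in that space. Fix $\mu$, $m,k\le 2\mu$ and $\veps>0$; choose $N$ with $\gamma^{2\mu\pm1/2}_{m,k}(\varphi_n-\varphi_\ell)<\veps$ for all $n,\ell\ge N$. For each fixed $x\in\R_+$ the expression under the supremum defining $\gamma^{2\mu\pm1/2}_{m,k}(\varphi_n-\varphi_\ell)$ involves only finitely many derivatives of $\varphi_\ell$ at that point, so the locally uniform $\EE$-convergence permits a pointwise passage to the limit $\ell\to\infty$, giving
\begin{equation*}
\bigl|x^m(x^{-1}D)^k\bigl(x^{-(2\mu\pm1/2)-1/2}(\varphi_n-\varphi)(x)\bigr)\bigr|\le\veps\quad\text{for every }x\in\R_+.
\end{equation*}
Taking the supremum in $x$ yields $\gamma^{2\mu\pm1/2}_{m,k}(\varphi_n-\varphi)\le\veps$ and hence $\beta^\mu(\varphi_n-\varphi)\le 2\veps$ for $n\ge N$; the triangle inequality then gives $\beta^\mu(\varphi)\le\beta^\mu(\varphi_N)+2\veps<\infty$ for every $\mu\in\N_0$, so $\varphi\in\mathcal K_{-1/2}(\R_+)$ and $\varphi_n\to\varphi$ in the projective topology.

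The principal obstacle will be the simultaneous handling of two limits: the supremum over the unbounded set $\R_+$ built into each Zemanian-type seminorm, and the limit in $\ell$ used to identify $\varphi$. A naive exchange of the two would be illegitimate, but fixing $x$ first, passing to the limit pointwise by $\EE$-convergence, and only afterwards taking the supremum over $x$ bypasses the difficulty. This is exactly the mechanism that underlies the completeness proof for the classical Zemanian spaces $\mathcal K^\mu(\R_+)$, and it is precisely this parallel that the remark preceding the proposition invokes.
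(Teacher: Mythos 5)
Your proof is correct and follows essentially the same route the paper intends: the paper omits the argument, noting only that it relies on the completeness of the Zemanian spaces $\mathcal K^{\mu}(\RR_+)$, and the mechanism you spell out --- identifying the candidate limit in $\mathcal E(\RR_+)$ and then passing to the limit pointwise inside each seminorm before taking the supremum over $x$ --- is precisely Zemanian's completeness argument transplanted to the seminorms $\beta^\mu$. Your preliminary observations (countably many seminorms, $\beta^0$ dominating $\sup_x|\varphi(x)|$, hence metrizable, locally convex and Hausdorff) are likewise the intended ones.
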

\noindent This proposition implies  that $\mathcal{K}_{-1/2}(\R_+)$ is a barreled space.

\begin{proposition}\label{propmont}
A sequence $\{\varphi_\nu\}_{\nu\in\NN_0}$ in $\mathcal{K}_{-1/2}(\RR_+)$ converges to zero if and only if it is bounded and for every compact set $K\subset \RR_+$, and every $\mu\in\NN_0,$
\begin{equation}\label{sup1}
\sup_{x\in K \atop 0\leq m\leq 2\mu, 0\leq k\leq 2\mu}\Big|x^m(x^{-1}D)^k\big(x^{-2\mu}\varphi_\nu(x)\big)\Big|\rightarrow 0, \quad \nu\rightarrow\infty, \mu\in\N_0
\end{equation}
 and
\begin{equation}\label{sup2}
\sup_{x\in K \atop 0\leq m\leq 2\mu, 0\leq k\leq 2\mu}\Big|x^m(x^{-1}D)^k\big(x^{-2\mu-1}\varphi_\nu(x)\big)\Big|\rightarrow 0, \quad \nu\rightarrow\infty,  \mu\in\N_0.
\end{equation}

\end{proposition}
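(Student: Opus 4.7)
The "only if" direction is immediate: if $\varphi_\nu\to 0$ in $\mathcal K_{-1/2}(\R_+)$, then $\beta^\mu(\varphi_\nu)\to 0$ for every $\mu\in\NN_0$, which is strictly stronger than (\ref{sup1}) and (\ref{sup2}) (the supremum in $\beta^\mu$ is over all of $\R_+$, not merely a compact $K$) and a fortiori entails boundedness. The real content is the converse. For that, I fix $\mu\in\NN_0$ and indices $m,k$ with $0\le m,k\le 2\mu$ and aim at $\gamma^{2\mu-1/2}_{m,k}(\varphi_\nu)\to 0$; the parallel argument with the shift $x^{-2\mu-1}$ in place of $x^{-2\mu}$ and (\ref{sup2}) in place of (\ref{sup1}) then yields the same for $\gamma^{2\mu+1/2}_{m,k}$, and taking the maximum over the finite index set gives $\beta^\mu(\varphi_\nu)\to 0$. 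The plan is the familiar three-piece split $\R_+=(0,r]\cup[r,R]\cup[R,\infty)$: choose $r$ small and $R$ large so that the two tails are at most $\varepsilon/3$ uniformly in $\nu$ (exploiting the boundedness hypothesis $\sup_\nu\beta^{\mu+N}(\varphi_\nu)\le C_N$ for every $N$), and invoke hypothesis (\ref{sup1}) on the compact $K=[r,R]$ for the middle piece.

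For the tail near $0$ I would write $x^{-2\mu}\varphi_\nu=x^{2L}\cdot x^{-2(\mu+L)}\varphi_\nu$ with $L=2\mu+1$ (so $L>k$), and expand using the Leibniz rule for the derivation $x^{-1}D$ together with $(x^{-1}D)^j(x^{2L})=\frac{2^j L!}{(L-j)!}\,x^{2L-2j}$ for $j\le L$:
\begin{equation*}
x^m(x^{-1}D)^k(x^{-2\mu}\varphi_\nu)=\sum_{j=0}^{k} c_{j,L,k}\,x^{m+2L-2j}\,(x^{-1}D)^{k-j}\bigl(x^{-2(\mu+L)}\varphi_\nu\bigr).
\end{equation*}
The smallest exponent $m+2L-2j$ in the sum equals $m+2L-2k\ge 2$, so on $(0,r]$ with $r\le 1$ each factor $x^{m+2L-2j}$ is bounded by $r^2$, while the remaining $|(x^{-1}D)^{k-j}(x^{-2(\mu+L)}\varphi_\nu)|\le \gamma^{2(\mu+L)-1/2}_{0,k-j}(\varphi_\nu)\le \beta^{\mu+L}(\varphi_\nu)\le C_L$ uniformly in $\nu$ by the boundedness hypothesis. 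Hence $\sup_{0<x\le r}|x^m(x^{-1}D)^k(x^{-2\mu}\varphi_\nu)|\le C'r^2$, which is less than $\varepsilon/3$ for $r$ sufficiently small, uniformly in $\nu$.

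For the tail at infinity I split on the size of $m$. When $m<2\mu$ the direct estimate
\begin{equation*}
\bigl|x^m(x^{-1}D)^k(x^{-2\mu}\varphi_\nu)\bigr|=x^{-(2\mu-m)}\bigl|x^{2\mu}(x^{-1}D)^k(x^{-2\mu}\varphi_\nu)\bigr|\le R^{-(2\mu-m)}\,\beta^\mu(\varphi_\nu)
\end{equation*}
on $[R,\infty)$ already gives the needed decay. The delicate boundary case $m=2\mu$ requires the coupling within $\beta^{\mu'}$ between the families $\gamma^{2\mu'-1/2}$ and $\gamma^{2\mu'+1/2}$: writing $x^{-2\mu}\varphi_\nu=x\cdot x^{-2\mu-1}\varphi_\nu$ and applying Leibniz with the formulas $(x^{-1}D)^0(x)=x$ and $(x^{-1}D)^j(x)=(-1)^{j-1}(2j-3)!!\,x^{1-2j}$ for $j\ge 1$, $x^{2\mu}(x^{-1}D)^k(x^{-2\mu}\varphi_\nu)$ is rewritten as a finite linear combination of terms $x^{2\mu+1-2j}(x^{-1}D)^{k-j}(x^{-2\mu-1}\varphi_\nu)$, $0\le j\le k$. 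Extracting one factor of $x^{-1}$ from each on $[R,\infty)$ and then applying the Zemanian-type inequality from (\ref{nej1})--(\ref{nej2}) finitely many times to land inside $\beta^{\mu+N}$ for a suitable $N$ yields $\sup_{x\ge R}|x^{2\mu}(x^{-1}D)^k(x^{-2\mu}\varphi_\nu)|\le C''/R$, uniform in $\nu$.

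Putting the three pieces together, given $\varepsilon>0$ I fix $r$ small and $R$ large so that both tail contributions are at most $\varepsilon/3$ uniformly in $\nu$; then hypothesis (\ref{sup1}) on $K=[r,R]$ makes the middle piece less than $\varepsilon/3$ for all $\nu$ large. Thus $\gamma^{2\mu-1/2}_{m,k}(\varphi_\nu)<\varepsilon$ eventually, and running the parallel argument with (\ref{sup2}) and $x^{-2\mu-1}$ in place of $x^{-2\mu}$ yields the same for $\gamma^{2\mu+1/2}_{m,k}$, whence $\beta^\mu(\varphi_\nu)\to 0$ for every $\mu$. The main technical obstacle is the careful index bookkeeping in the large-$x$ boundary case $m=2\mu$, where the naive multiply-and-divide by $x^2$ lands on an index just outside the seminorm family $\beta^\mu$, so one must oscillate between the $\pm 1/2$ halves of $\beta^{\mu+N}$ through the Zemanian recursion to recover the necessary $R^{-1}$-decay.
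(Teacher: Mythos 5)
Your three--region strategy (small $x$, compact middle, large $x$, with the two tails controlled uniformly in $\nu$ by boundedness in higher seminorms) is exactly the paper's, and your treatment of the origin tail via the Leibniz expansion of $x^{2L}\cdot x^{-2(\mu+L)}\varphi_\nu$ is correct and in fact cleaner than the paper's version of that computation. The problem is the step you yourself flag as delicate: the large-$x$ estimate in the boundary case $m=2\mu$ of \eqref{sup1}. There you need a bound, uniform in $\nu$, on $\gamma^{2\mu+1/2}_{2\mu+2,k}(\varphi_\nu)=\sup_x x^{2\mu+2}\bigl|(x^{-1}D)^k(x^{-2\mu-1}\varphi_\nu(x))\bigr|$, and you propose to get it by iterating the recursion behind \eqref{nej1}--\eqref{nej2}, i.e.\ $\gamma^{s}_{m,k}\le 2k\,\gamma^{s+2}_{m,k-1}+\gamma^{s+2}_{m+2,k}$. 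This iteration never lands inside the family $\{\beta^{\mu'}\}$: in the second summand both $m$ and $s$ increase by $2$, so the excess of $m$ over the admissible range $m\le 2\mu'$ is invariant, and for $k=0$ the recursion degenerates to $\gamma^{s}_{m,0}\le\gamma^{s+2}_{m+2,0}$, which makes no progress at all. Indeed $\gamma^{2\mu+1/2}_{2\mu+2,0}(\varphi)=\sup_x x\,|\varphi(x)|$ is not a continuous seminorm on $\mathcal{K}_{-1/2}(\RR_+)$: the defining seminorms only ever control $\sup_x x^{m-2\mu-\epsilon}|\cdots|$ with $m\le 2\mu$, $\epsilon\in\{0,1\}$, hence never a positive power of $x$ against $\varphi$ at infinity.

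The gap is not reparable, because the uniform tail estimate you need there is false, and with it the ``if'' direction of the proposition as stated. Take $\rho\in\mathcal D((0,1))$, $\rho\not\equiv 0$, and $\varphi_\nu(x)=\rho(x-\nu)$. Each $\varphi_\nu$ lies in $\mathcal D(\RR_+)\subset\mathcal{K}_{-1/2}(\RR_+)$; on the support $[\nu,\nu+1]$ one has $x^m(x^{-1}D)^k(x^{-2\mu-\epsilon}\varphi_\nu)=O(\nu^{\,m-2\mu-\epsilon-k})=O(1)$ for $m\le 2\mu$, so the sequence is bounded in every $\beta^\mu$, and \eqref{sup1}, \eqref{sup2} hold trivially because $\varphi_\nu$ vanishes on any fixed compact set for large $\nu$; yet $\beta^0(\varphi_\nu)\ge\sup_x|\varphi_\nu(x)|=\max\rho$ does not tend to $0$. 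The same example defeats the paper's own one-line treatment of the tail at infinity (the displayed inequality there is not an identity and silently restricts to $m+1\le 2\mu$), so this is a defect of the statement rather than only of your write-up. Note that your large-$x$ estimates do go through for $m<2\mu$, and also in the boundary case of \eqref{sup2}, where writing $x^{-2\mu-1}=x\cdot x^{-2(\mu+1)}$ lands on $\gamma^{2(\mu+1)-1/2}_{2(\mu+1),k}\le\beta^{\mu+1}$; it is precisely the terms $\gamma^{2\mu-1/2}_{2\mu,k}$ of \eqref{sup1} that cannot be salvaged.
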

\begin{proof}
We give a proof for (\ref{sup1}), since the proof for (\ref{sup2}) is similar.
For every $\mu\in\NN_0$ and every compact set $K\subset \RR_+$,
\begin{equation}\label{*}
\sup_{x\in K \atop 0\leq m\leq 2\mu, 0\leq k\leq 2\mu}x^m|(x^{-1}D)^k\big(x^{-2\mu}\varphi_\nu(x)\big)|\rightarrow0 \ \ \text{ as } \ \  \nu\rightarrow\infty
\end{equation}
if and only if for every $\alpha\in\N_0$
$$\sup_{x\in K}|\varphi_\nu^{(\alpha)}(x)|\rightarrow 0 \ \text{ as }  \ \ \nu\rightarrow\infty.
$$

\noindent It is easy to see that if $\{\varphi_\nu\}_{\nu\in\NN_0}$ converges to zero in $\mathcal{K}_{-1/2}(\RR_+)$, it is bounded and \eqref{*} holds. Let us prove the opposite direction. For this, it is enough to prove that for all $\mu\in\N_0$ and all $\delta>0$ there exist $\varepsilon>0$, $M>1$ and $\nu_0\in\N$ such that
\begin{equation}\label{(1)}
\sup_{x\in (0,
\varepsilon) \atop 0\leq m\leq 2\mu, 0\leq k\leq 2\mu}x^m|(x^{-1}D)^k\big(x^{-2\mu}\varphi_\nu(x)\big)|<\delta \ \ \text{ for } \ \ \nu>\nu_0
\end{equation}
\noindent and
\begin{equation}\label{(2)}
\sup_{x \geq M \atop 0\leq m\leq 2\mu, 0\leq k\leq 2\mu}x^m|(x^{-1}D)^k\big(x^{-2\mu}\varphi_\nu(x)\big)|<\delta \ \ \text{ for } \ \ \nu\rightarrow\nu_0.
\end{equation}
Firstly, we show the estimate \eqref{(2)}. Since there exists $M>1$ such that

\begin{eqnarray*}
& & \sup_{x\geq M \atop 0\leq m\leq 2\mu, 0\leq k\leq 2\mu}x^m|(x^{-1}D)^k\big(x^{-2\mu}\varphi_\nu(x)\big)|\\
& \leq & \frac{1}{M}\sup_{x\geq M\atop  m+1 \leq 2\mu, \ k+1 \leq 2\mu}x^{m+1}|(x^{-1}D)^k(x^{-2(\mu+1)}\varphi_\nu(x))|
)\leq  \frac{1}{M}
\beta^{2(\mu+1)}(\varphi_\nu), \; \nu\in\N,
\end{eqnarray*}
the proof of \eqref{(2)} follows.

\noindent The next step is to show the estimate \eqref{(1)}.
Using \cite[p.132]{Zem}, for $m,k,\mu\in\N_0$ and $\varphi\in \mathcal{K}_{-1/2}(\mathbb R_+)\subset\mathcal K^{\mu}(\mathbb R_+)$, the following recursive relation holds
$$ x^m(x^{-1}D)^k\big(x^{-2\mu}\varphi(x)\big)=2kx^m(x^{-1}D)^{k-1}\big(x^{-2\mu-2}\varphi(x)\big)+x^{m+2}(x^{-1}D)^k\big(x^{-2\mu-2}\varphi(x)\big).$$
 This implies
\begin{align*}
x^m(x^{-1}D)^k\big(x^{-2\mu}\varphi(x)\big)&=2k(2k-2)x^m(x^{-1}D)^{k-2}\big(x^{-2\mu-4}\varphi(x)\big)\\
&+2kx^{m+2}(x^{-1}D)^{k-1}\big(x^{-2\mu-4}\varphi(x)\big)+x^{m+2}(x^{-1}D)^k\big(x^{-2\mu-2}\varphi(x)\big).
\end{align*}
We conclude that
\begin{eqnarray}
 & & x^m(x^{-1}D)^k\big(x^{-2\mu}\varphi(x)\big)\nonumber \\
 & = & (2k)!!x^m\big(x^{-2\mu-2k}\varphi(x)\big)+x^{m+2}\sum_{j=0}^{k-1}C_j(x^{-1}D)^{k-j}\big(x^{-2\mu-2(j+1)}\varphi(x)\big), \label{trecidan}
\end{eqnarray}

\noindent where $C_0=1, \ C_1=2k, \ C_2=2k(2k-2), \ \ldots, \ C_{k-1}=2k(2k-2)\cdots2$.

\noindent By (\ref{trecidan}) and assumptions that  $m\leq 2\mu, \ k\leq 2\mu$ we have

\begin{eqnarray*}
& & \sup_{x\in (0,
\varepsilon) \atop 0\leq m\leq 2\mu, 0\leq k\leq 2\mu}x^m|(x^{-1}D)^k\big(x^{-2\mu}\varphi_\nu(x)\big)|\\
& \leq &  (2\mu)!!
\varepsilon\sup_{x\in(0,\varepsilon)}(x^{-2\mu-2k}|\varphi(x)|+
\varepsilon x^{2}\sum_{j=0}^{k-1}C_j(x^{-1}D)^{k-j}(x^{-2\mu-2(j+1)}|\varphi(x)|)\\
& \leq & \varepsilon(\gamma^{4\mu}_{2\mu,2\mu}(\varphi)+
\varepsilon C(\mu)\gamma^{4\mu}_{2\mu,2\mu}(\varphi))\leq\beta^{2\mu}(\varphi).
\end{eqnarray*}
This implies the assertion.
\end{proof}

\begin{theorem}
The space $\mathcal{K}_{-1/2}(\RR_+)$ is a Montel space.
\end{theorem}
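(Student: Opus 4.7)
Since the preceding Proposition establishes that $\mathcal{K}_{-1/2}(\RR_+)$ is a Fréchet space, it is metrizable and barrelled. To show it is Montel, it therefore suffices to show that every bounded sequence $\{\varphi_\nu\}_{\nu\in\NN}$ in $\mathcal{K}_{-1/2}(\RR_+)$ admits a convergent subsequence. My plan is to extract a candidate limit by working in the larger Montel space $\mathcal{E}(\RR_+)$, verify that the limit lies in $\mathcal{K}_{-1/2}(\RR_+)$ by a Fatou-type argument on the seminorms, and then invoke Proposition \ref{propmont} to conclude convergence in $\mathcal{K}_{-1/2}(\RR_+)$.

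First, I would observe that since the inclusions $\mathcal{K}_{-1/2}(\RR_+)\hookrightarrow \mathcal{K}^{2\mu\pm1/2}_{2\mu,2\mu}(\RR_+)$ are continuous and $\RR_+$ is an open subset of $\RR$ on which the maps $\varphi\mapsto x^m(x^{-1}D)^k(x^{-\mu-1/2}\varphi)$ are smooth, the sequence $\{\varphi_\nu\}$ is bounded in $\mathcal{E}(\RR_+)$. Because $\mathcal{E}(\RR_+)$ is a Fréchet--Schwartz, hence Montel, space, a diagonal extraction yields a subsequence (again denoted $\{\varphi_\nu\}$) and a function $\varphi\in C^\infty(\RR_+)$ such that $\varphi_\nu^{(\alpha)}\to \varphi^{(\alpha)}$ uniformly on every compact $K\subset\RR_+$ for every $\alpha\in\NN_0$.

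Next, I would show $\varphi\in\mathcal{K}_{-1/2}(\RR_+)$. Fix $\mu\in\NN_0$. Boundedness gives a constant $C_\mu$ with $\beta^\mu(\varphi_\nu)\le C_\mu$ for all $\nu$. For any $x\in\RR_+$ and admissible $m,k$, the function $x^m(x^{-1}D)^k(x^{-2\mu\pm1/2}\varphi_\nu(x))$ converges pointwise to $x^m(x^{-1}D)^k(x^{-2\mu\pm1/2}\varphi(x))$, hence
\[
\bigl|x^m(x^{-1}D)^k(x^{-2\mu\pm1/2}\varphi(x))\bigr|\le C_\mu,
\]
and taking the supremum in $x$, $m$, $k$ gives $\beta^\mu(\varphi)\le C_\mu$, so $\varphi\in\mathcal{K}_{-1/2}(\RR_+)$.

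Finally, set $\psi_\nu:=\varphi_\nu-\varphi$. The sequence $\{\psi_\nu\}$ is bounded in $\mathcal{K}_{-1/2}(\RR_+)$ (by the triangle inequality for each $\beta^\mu$) and by construction $\psi_\nu^{(\alpha)}\to 0$ uniformly on compact subsets of $\RR_+$ for every $\alpha$. On any compact $K\subset\RR_+$ the differential operators $x^m(x^{-1}D)^k(x^{-2\mu}\,\cdot\,)$ and $x^m(x^{-1}D)^k(x^{-2\mu-1}\,\cdot\,)$ have coefficients uniformly bounded on $K$, so the uniform convergence of derivatives transfers to the quantities appearing in (\ref{sup1}) and (\ref{sup2}). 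Proposition \ref{propmont} therefore implies $\psi_\nu\to 0$ in $\mathcal{K}_{-1/2}(\RR_+)$, i.e.\ $\varphi_\nu\to\varphi$, completing the proof. The main technical obstacle is the identification of the limit as an element of $\mathcal{K}_{-1/2}(\RR_+)$, which is handled by the Fatou-type step above; the remainder is a routine application of Proposition \ref{propmont} to the difference sequence.
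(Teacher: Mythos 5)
Your proof is correct and follows essentially the same route as the paper: extract a subsequence converging with all derivatives uniformly on compacts (the paper does this directly via Arzel\`a--Ascoli, you via the Montel property of $\mathcal{E}(\RR_+)$, which amounts to the same thing) and then upgrade to convergence in $\mathcal{K}_{-1/2}(\RR_+)$ by Proposition \ref{propmont}. If anything, your version is the more complete one, since you explicitly identify the limit function, verify by the Fatou-type estimate on the seminorms that it belongs to $\mathcal{K}_{-1/2}(\RR_+)$, and pass to the difference sequence --- steps the paper's proof leaves implicit.
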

\begin{proof}
 We need to prove the Heine-Borel property, i.e. that every bounded set $A\subset\mathcal{K}_{-1/2}(\RR_+)$ is relatively compact. Using Proposition \ref{propmont} it suffices to prove that for every compact set $K$, a sequence $\{\varphi_\nu\}_{\nu\in\NN_0}\in A$ has a convergent sub-sequence again denoted by $\{\varphi_\nu\}_{\nu\in\NN_0}$ that has the property that for every $\alpha\in\N_0$, $\sup_{x\in K}|\varphi_\nu^{(\alpha)}(x)|\rightarrow 0$  as $\nu\rightarrow\infty$.

Let  $\{\varphi_\nu\}_{\nu\in\NN_0}$ be a bounded sequence in $A$. Since $\mathcal{K}_{-1/2}(\RR_+)$ is a  barrelled space, it follows that  $\{\varphi_\nu\}_{\nu\in\NN_0}$  is uniformly bounded, as well as the sequences  with  derivatives of all orders.  This implies  that for each $\alpha$, $\{\varphi_\nu^{(\alpha)}\}_{\nu\in\NN_0}$ is a uniformly bounded, equicontinuous set, and thus by the Arzel\`{a} -Ascoli theorem, it has a uniformly convergent subsequence. This completes the proof.
\end{proof}

\begin{remark}\label{jaka=slaba konv}
Recall, that every Montel space is reflexive. Moreover, it should be noted that the strong dual of a Montel space is also a Montel space. Finally, it is crucial to mention that the weak and the strong convergence in a Montel space are equivalent, \cite{horvat}.
\end{remark}

\section{Fractional Hankel transform}\label{FRHT}

The Hankel transform of order $\mu\geq-1/2,$ of a function $f\in L^1(\RR_+)$ is
defined by
\begin{equation*}\label{ht}
H_\mu f(\xi)=\int_0^\infty \sqrt{x\xi}J_\mu(x\xi) f(x)dx, \ \ \xi\in \RR_+,
\end{equation*}
where,
$J_\mu(x)$
is the Bessel function of first kind and order $\mu$. If $H_{\mu} f\in L^1(\RR_+)$, then the inverse Hankel transform is given by
$ f(x)=\int_0^\infty \sqrt{x\xi}J_{\mu}(x\xi) H_{\mu} f(\xi)d\xi, \ \ x\in \RR_+.$

Recall \cite[(1.3)]{Pras}, the FrHT with parameter $\alpha$ of $f(x)$ for $\mu\geq -1/2$ and $\alpha\in(0,\pi)$,  is defined by
$
{H}_\mu^\alpha f(\xi)=
\int_{0}^\infty K_{\alpha}(x,\xi)f(x)dx,
$
where
\begin{equation*}\label{kernel}
 K_{\alpha}(x,\xi)=\begin{cases}
 C_{\alpha,\mu} e^{-i(\frac{x^2+\xi^2}{2}c_1)}\sqrt{x\xi c_2}J_\mu(x\xi c_2), & \alpha\neq\frac{\pi}{2},\\
 \sqrt{x\xi}J_\mu(x\xi), & \alpha=\frac{\pi}{2}
 \end{cases}
   \end{equation*}
is the kernel of the FrHT,
 $c_1=\cot\alpha$, $c_2=\csc \alpha$,  $C_{\alpha,\mu}=\frac{e^{i(1+\mu)(\frac{\pi}{2}-\alpha)}}{\sin\alpha}$.
By\cite{Pras}, the FrHT is a continuous linear mapping of $\mathcal K^\mu(\R_+)$ onto itself.

For $f\in \mathcal K_{-1/2}(\RR_+)$, and $\mu\in\N_0,$ the inverse FrHT is defined in \cite[(1.4)]{5} as
\begin{equation}\label{inverse}
(H_{\mu}^{\alpha})^{-1}H_{\mu}^{\alpha}f(x)=f(x)=\int_{0}^\infty \overline{K}_{\alpha}(x,\xi)(H_{\mu}^\alpha f)(\xi)d\xi,
\end{equation}
where
$\overline{K}_{\alpha}(x,\xi)=C_{\alpha, \mu}^{\star}e^{i(\frac{x^2+\xi^2}{2}c_1)}\sqrt{x\xi c_2}J_{\mu}(x\xi c_2), \, C_{\alpha,\mu}^{\star}=\overline{C_{\alpha,\mu}}\sin{\alpha}.$
In our results, we benefit from the  following formula  (\cite[Theorem 4.23]{5})\
\begin{equation}\label{FrHT veza HT}
H_{\mu}^\alpha f(\xi)=C_{\alpha,\mu}e^{-ic_1\xi^2/2}H_{\mu}(e^{-ic_1 x^2/2}f(x))(c_2\xi), \ x,\xi\in \RR_+,
\end{equation}
which gives the relation between the FrHT and the classical Hankel transform.

We fix $\mu_0 \in \NN_0$ for the rest of the paper.

 \begin{theorem}   The FrHT $H^\alpha_{\mu_0}$ is a continuous linear mapping of $\mathcal K_{-1/2}(\R_+)$ onto itself.
\end{theorem}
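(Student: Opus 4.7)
The plan is to base the proof on the factorisation (\ref{FrHT veza HT}), which writes $H^\alpha_{\mu_0}$ as a composition of four operations: (i) multiplication by the chirp $e^{-ic_1x^2/2}$; (ii) the classical Hankel transform $H_{\mu_0}$; (iii) the dilation $f(\cdot)\mapsto f(c_2\cdot)$; (iv) multiplication by $C_{\alpha,\mu_0}e^{-ic_1\xi^2/2}$. It suffices to prove that each of these preserves $\mathcal K_{-1/2}(\R_+)$ continuously. Once continuity is established, surjectivity will follow at once from the inversion formula (\ref{inverse}): the inverse $(H^\alpha_{\mu_0})^{-1}$ has the same structural shape with $\overline{K}_\alpha$ in place of $K_\alpha$, so every estimate proved for $H^\alpha_{\mu_0}$ transfers verbatim, and for any $g\in \mathcal K_{-1/2}(\R_+)$ the element $f:=(H^\alpha_{\mu_0})^{-1}g$ lies in $\mathcal K_{-1/2}(\R_+)$ and satisfies $H^\alpha_{\mu_0}f=g$.

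Items (i) and (iv) are handled by the key observation that $(x^{-1}D)e^{-ic_1x^2/2}=-ic_1\,e^{-ic_1x^2/2}$, so by induction $(x^{-1}D)^k e^{-ic_1x^2/2}=(-ic_1)^k e^{-ic_1x^2/2}$. Since $x^{-1}D$ satisfies the ordinary Leibniz rule, expanding $(x^{-1}D)^k\bigl(x^{-\mu-1/2}e^{-ic_1x^2/2}\varphi\bigr)$ yields
$$e^{-ic_1x^2/2}\sum_{j=0}^k\binom{k}{j}(-ic_1)^{k-j}(x^{-1}D)^j\bigl(x^{-\mu-1/2}\varphi\bigr),$$
and taking moduli (using $\lvert e^{-ic_1x^2/2}\rvert=1$) gives $\beta^\mu(e^{-ic_1x^2/2}\varphi)\le C_\mu\beta^\mu(\varphi)$. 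For (iii) a short computation shows $\gamma^\mu_{m,k}(f(c_2\cdot))\le c_2^{\mu+1/2+2k-m}\gamma^\mu_{m,k}(f)$, hence $\beta^\mu(f(c_2\cdot))\le C_{\mu,c_2}\beta^\mu(f)$.

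The substantial work lies in (ii): proving $H_{\mu_0}:\mathcal K_{-1/2}(\R_+)\to\mathcal K_{-1/2}(\R_+)$ continuously. Zemanian's classical theorem provides continuity of $H_{\mu_0}$ only on $\mathcal K^{\mu_0}(\R_+)$, whereas our target seminorms $\gamma^{2\mu\pm1/2}_{m,k}$ are indexed by half-odd-integer orders that are generally incomparable with $\mu_0$. I would overcome this by using the Zemanian Bessel-operator identities $H_{\mu_0}(x^2\varphi)=-N_{\mu_0}H_{\mu_0}\varphi$ and $H_{\mu_0}(N_{\mu_0}\varphi)=-y^2H_{\mu_0}\varphi$ to convert each factor $y^m(y^{-1}D)^k$ acting on $H_{\mu_0}\varphi$ into a polynomial combination of $x^{m'}(x^{-1}D)^{k'}$ acting on $\varphi$ itself, together with the characterisation in Proposition \ref{propmont} (elements of $\mathcal K_{-1/2}(\R_+)$ vanish to infinite order at $0$ and decrease rapidly at $\infty$), which validates these manipulations and reconciles the $y^{-\mu-1/2}$ normalising factors for each admissible order. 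A careful bookkeeping then dominates every $\gamma^{2\mu\pm1/2}_{m,k}(H_{\mu_0}\varphi)$ with $m,k\le 2\mu$ by a finite linear combination of $\beta^{\mu'}(\varphi)$ for some $\mu'=\mu'(\mu,\mu_0)$.

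The main obstacle is precisely this reconciliation in Step (ii): transferring Zemanian's continuity of $H_{\mu_0}$ on $\mathcal K^{\mu_0}(\R_+)$ to our new space whose seminorms sit on a different "half-odd-integer ladder" than $\mu_0$. The paper's own remark that $\mathcal K^\mu(\R_+)$ and $\mathcal K^\nu(\R_+)$ are comparable only when $|\mu-\nu|\in 2\N$ warns that no direct inclusion chain will do the job, and the argument must genuinely exploit the Bessel-operator intertwining relations together with the strong decay and vanishing properties built into $\mathcal K_{-1/2}(\R_+)$.
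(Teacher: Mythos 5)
Your decomposition is the same one the paper uses: its proof also starts from \eqref{FrHT veza HT}, writes $H^\alpha_{\mu_0}\varphi = C_{\alpha,\mu_0}e^{-ic_1\xi^2/2}\,\Phi_{\mu_0}\varphi$ with $\Phi_{\mu_0}\varphi(\xi)=H_{\mu_0}(e^{-ic_1x^2/2}\varphi)(c_2\xi)$, and peels off the outer chirp by exactly your observation that $(\xi^{-1}D)e^{-ic_1\xi^2/2}=-ic_1e^{-ic_1\xi^2/2}$ together with the Leibniz rule, reducing the $\beta^\mu$--seminorms of $H^\alpha_{\mu_0}\varphi_\nu$ to those of $\Phi_{\mu_0}\varphi_\nu$; surjectivity is likewise dispatched via the inversion formula \eqref{inverse}. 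The genuine difference is that you isolate, as your step (ii), the continuity of the classical Hankel transform $H_{\mu_0}$ on $\mathcal K_{-1/2}(\R_+)$ itself, whereas the paper's proof stops at the reduction to $\Phi_{\mu_0}\varphi_\nu$ and treats that continuity as already available (presumably from the Zemanian/Prasad--Singh results on $\mathcal K^{\mu}(\R_+)$). You are right that this is the one point where something beyond the classical theory is needed, precisely because the seminorms $\gamma^{2\mu\pm 1/2}_{m,k}$ of the new space live on a half-odd-integer ladder not comparable with $\mu_0$; your proposed route through the Bessel-operator intertwining identities and the behaviour of test functions at $0$ and $\infty$ is the natural way to close it. However, as written that step is a plan rather than a proof --- the ``careful bookkeeping'' that dominates each $\gamma^{2\mu\pm1/2}_{m,k}(H_{\mu_0}\varphi)$ by finitely many $\beta^{\mu'}(\varphi)$ is exactly the content that would have to be supplied, and it is nontrivial since the normalising weights $\xi^{-2\mu}$, $\xi^{-2\mu-1}$ must be reconciled with the order $\mu_0+1/2$ attached to the kernel $\sqrt{x\xi}J_{\mu_0}(x\xi)$. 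So your write-up is incomplete, but it is incomplete at the same spot where the paper's own argument is silent, and it has the merit of naming that spot explicitly; your treatment of the chirp multiplications, the dilation (the exponent $c_2^{\mu+1/2+2k-m}$ is correct), and surjectivity is fine.
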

\begin{proof}
Linearity and surjectivity follow straightforward. For continuity, it is enough to show that for every sequence $\{\varphi_\nu\}_{\nu\in\NN_0}$ such  that $\varphi_\nu\rightarrow 0$ in $\mathcal K_{-1/2}(\RR_+)$, it holds that $H_{\mu_0}^\alpha(\varphi_\nu)\rightarrow 0$ in $\mathcal K_{-1/2}(\RR_+)$, when $\nu\rightarrow\infty$.

\noindent By (\ref{FrHT veza HT}), $\Phi_{\mu_0}\varphi(\xi)=H_{\mu_0}(e^{-i \xi^2 c_1/2}\varphi)(\xi c_2)$, $\xi\in\RR_+$, we obtain
\begin{align*}
&\sup_{\xi\in\RR_+ \atop 0\leq m \leq 2\mu, 0\leq k\leq 2\mu}|\xi^m(\xi^{-1}D)^{k}(\xi^{-2\mu}H_{\mu_0}^\alpha\varphi_\nu(\xi))|\\
&=\sup_{\xi\in\RR_+ \atop 0\leq m \leq 2\mu, 0\leq k\leq 2\mu}|\xi^m(\xi^{-1}D)^{k}(\xi^{-2\mu}C_{\alpha, \mu_0}e^{-i \xi ^2 c_1/2}\Phi_{\mu_0}\varphi_\nu(\xi))|\\
&=\sup_{\xi\in\RR_+ \atop 0\leq m \leq 2\mu, 0\leq k\leq 2\mu}\Big|\xi^m C_{\alpha,\mu_0}\sum_{k'=0}^{k}\binom{k}{k'}(-ic_1)^{k'}e^{-i \xi^2 c_1/2}(\xi^{-1}D)^{k-k'}(\xi^{-2\mu}\Phi_{\mu_0}\varphi_\nu(\xi))\Big|\\
&=C'_{\alpha,\mu_0}\sum_{k'=0}^{k}\binom{k}{k'}\sup_{\xi\in\RR_+ \atop 0\leq m \leq 2\mu, 0\leq k\leq 2\mu}|\xi^m (\xi^{-1}D)^{k-k'}(\xi^{-2\mu}\Phi_{\mu_0}\varphi_\nu(\xi))|,
\end{align*}
where $C'_{\alpha,\mu_0}=C_{\alpha,\mu_0}c_1^{k'}$.
In the similar way, it can be shown that
\begin{eqnarray*}
& & \sup_{\xi\in\RR_+ \atop 0\leq m \leq 2\mu, 0\leq k\leq 2\mu}|\xi^m(\xi^{-1}D)^{k}(\xi^{-2\mu-1}H_{\mu_0}^\alpha\varphi_\nu(\xi))|\\
& = & C'_{\alpha,\mu_0}\sum_{k'=0}^{k}\binom{k}{k'}\sup_{\xi\in\RR_+ \atop 0\leq m \leq 2\mu, 0\leq k\leq 2\mu}|\xi^m (\xi^{-1}D)^{k-k'}(\xi^{-2\mu-1}\Phi_{\mu_0}\varphi_\nu(\xi))|.
\end{eqnarray*}
\end{proof}
Using the result of \cite[Definition 3.1]{Pras}, the FrHT can be extended to the space of distributions $\mathcal K'_{-1/2}(\R_+)$.
\begin{equation*}\label{frdirect}
\langle H_{\mu_0}^{\alpha} f,\varphi\rangle=\langle  f,H_{\mu_0}^{\alpha}\varphi\rangle, \quad \varphi\in\mathcal K_{-1/2}(\RR_+).
\end{equation*}
Clearly, it is a continuous mapping of  $\mathcal K'_{-1/2}(\R_+)$ onto itself.


\section{The quasi-asymptotic behavior and quasi-asymptotic boundedness of distributions}

We recall some definitions from \cite{VDZ} and \cite{7} (see
also \cite{Vindas1,Vindas2}). A positive real-valued function, measurable on an
interval $(0,A]$ (resp. $[A,+\infty )),$ $A>0$, is called \textit{a
slowly varying function} at the origin (resp. at infinity), if
\begin{equation} \label{limL1}
\lim_{\varepsilon \to 0^{+} } \frac{L(a\varepsilon
)}{L(\varepsilon )} =1\quad \Big(\ {\rm resp.} \lim_{\lambda \to
+\infty } \frac{L(a\lambda )}{L(\lambda )} =1\Big)\quad {\rm for \ \
each} \ a>0.
\end{equation}
Throughout the rest of the article, $L$ will consistently denote a slowly varying function at the origin (resp. at infinity).

Let $f\in ({\mathcal K^{\mu}}(\RR_+))'$. It is said that the distribution $f$ has
\textit{the quasi-asymptotic behavior}  of
degree $m \in {\Bbb R}$  at zero (respectively, at $\infty$)
with respect to a slowly varying function $L$  at zero (respectively, at $\infty$)  in $({\mathcal K^{\mu}}(\RR_+))'$ if there exists $u\in ({\mathcal K^{\mu}}(\RR_+))'$
such that for every test function $\varphi \in \mathcal{K}^{\mu}(\RR_+)$ the following limit holds:
$$
\lim_{\varepsilon \to 0^{+} } \Big\langle \frac{f(x_{0} +\varepsilon
x)}{\varepsilon ^{m } L(\varepsilon )} ,\ \varphi (x)\Big\rangle
=\langle u(x),\varphi (x)\rangle\; \mbox{ respectively, } \lim_{\lambda \to \infty } \Big\langle \frac{f(\lambda
x)}{\lambda ^{m } L(\lambda )} ,\ \varphi (x)\Big\rangle
=\langle u(x),\varphi (x)\rangle .
$$

\noindent The convenient notation for the
quasi-asymptotic behavior of degree $m \in {\Bbb R}$ at a zero (respectively,  at infinity)
with respect to $L$ in $\mathcal{K}_{-1/2}'(\RR_+)$, is
\[f(x_{0} +\varepsilon x) \sim \varepsilon ^{m } L(\varepsilon )u(x)\quad {\rm as} \quad \varepsilon \to 0^{+} ,\quad \Big({\rm resp.} \quad f(\lambda x) \sim \lambda ^{m }
L(\lambda )u(x) \quad {\rm as} \quad \lambda \to +\infty\Big).\]

Note that $u$ is homogeneous with degree of homogeneity $m $, i.e.,
$u(ax)=a^{m } u(x)$, for all $a>0 $, see \cite{7,
VDZ}.

We introduce an additional concept from quasi-asymptotic analysis, namely the notion of {quasi-asymptotic boundedness},

    Let $f\in \mathcal{K}_{-1/2}'(\RR_+)$. The distribution $f$ is \emph{quasi-asymptotically
bounded} of degree $m\in \R$ at  zero with respect to $L$ in $(\mathcal{K}^{\mu}(\RR_+))'$ if

$$|\langle
f(\varepsilon x),\,\varphi(x)\rangle|\leq C\varepsilon^m
L(\varepsilon), \;\varphi\in\mathcal K_{-1/2}\;(C \mbox{ depends on } \varphi).$$

The notion of quasi-asymptotic boundedness of degree $m\in \R$ at infinity with respect to $L$ in $(\mathcal{K}^{\mu}(\RR_+))'$ is defined in an analogous manner.

\subsection{Abelian and Tauberian-type results}\label{Sec Abelian and Tauberian-type results}

In this section, we present Abelian and Tauberian-type theorems for the FrHT within the framework of the specially constructed space $\mathcal{K}_{-1/2}'(\RR_+)$, which was introduced in Section \ref{Sec prostor K} in order to obtain the desired results. Furthermore, we investigate quasi-asymptotic boundedness in the space $\mathcal{K}_{-1/2}'(\RR_+)$.
We  note that if $f\in\mathcal (K^{\mu_0}(\RR_+))'$ for some fixed $\mu_0\in\N_0$, then its restriction to $\mathcal K_{-1/2}(\RR_+)$ is an element of $\mathcal K'_{-1/2}(\RR_+)$.

The next Abelian theorem for the FrHT concerns the behavior at the point zero. An analogous result holds for the behavior at infinity; however we will not state it here, as it follows by repeating  the arguments of the forthcoming theorem.\\

Before we state the Abelian theorem, we first need the following lemma.
\begin{lemma}
Let $\varphi\in\mathcal K_{-1/2}(\R_+).$ Then:
\begin{itemize}
\item [$(a)$] $e^{ic_1 (\frac{\varepsilon x}{c_2})^2/2}\varphi(x)\in\mathcal K_{-1/2}(\R_+).$

\item [$(b)$] For every $g\in\mathcal K'_{-1/2}(\R_+)$, it holds that
$$\langle g(x),(e^{ic_1 (\frac{\varepsilon x}{c_2})^2/2}-1)\varphi(x)\rangle\rightarrow 0,\;  \ \varepsilon\rightarrow 0.
$$
\end{itemize}
\end{lemma}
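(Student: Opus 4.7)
The plan is to exploit the clean identity
\begin{equation*}
(x^{-1}D)^{j}\bigl(e^{ia_\varepsilon x^{2}}\bigr)=(2ia_\varepsilon)^{j}e^{ia_\varepsilon x^{2}},\qquad a_{\varepsilon}:=\frac{c_{1}\varepsilon^{2}}{2c_{2}^{2}},
\end{equation*}
which follows by induction from $(x^{-1}D)e^{ia_\varepsilon x^{2}}=2ia_{\varepsilon}e^{ia_\varepsilon x^{2}}$. Setting $\psi_\varepsilon(x):=e^{ia_\varepsilon x^2}$, this yields two key uniform-in-$x$ estimates: $|(x^{-1}D)^{j}\psi_{\varepsilon}(x)|=(2a_{\varepsilon})^{j}$ and $|\psi_{\varepsilon}(x)-1|\le\min(2,a_{\varepsilon}x^{2})$.

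For part $(a)$, the derivation $x^{-1}D$ obeys the Leibniz rule $(x^{-1}D)^{k}(fg)=\sum_{j=0}^{k}\binom{k}{j}(x^{-1}D)^{j}f\cdot(x^{-1}D)^{k-j}g$; applying it to $f=\psi_\varepsilon$ and $g=x^{-2\mu}\varphi$ and multiplying by $x^{m}$ yields
\begin{equation*}
x^{m}(x^{-1}D)^{k}(x^{-2\mu}\psi_\varepsilon\varphi)=\psi_\varepsilon\sum_{j=0}^{k}\binom{k}{j}(2ia_\varepsilon)^{j}x^{m}(x^{-1}D)^{k-j}(x^{-2\mu}\varphi).
\end{equation*}
Since $|\psi_\varepsilon|=1$ and $\gamma^{2\mu-1/2}_{m,k-j}(\varphi)\le\beta^{\mu}(\varphi)$ for $m,k-j\le 2\mu$, the sup is bounded by $(1+2a_\varepsilon)^{2\mu}\beta^{\mu}(\varphi)$; the same computation with $x^{-2\mu-1}$ in place of $x^{-2\mu}$ controls the $\gamma^{2\mu+1/2}_{m,k}$ part, giving $\beta^{\mu}(\psi_\varepsilon\varphi)<\infty$ for every $\mu\in\NN_0$, and hence $\psi_\varepsilon\varphi\in\mathcal K_{-1/2}(\R_+)$.

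For part $(b)$, by \eqref{cous} there exist $C>0$ and $N\in\NN_0$ with $|\langle g,\phi\rangle|\le C\beta^{N}(\phi)$, so it suffices to show $\beta^{N}\!\bigl((\psi_\varepsilon-1)\varphi\bigr)\to 0$ as $\varepsilon\to 0^{+}$. The same Leibniz expansion separates the seminorm into a $j\ge 1$ part, bounded by $((1+2a_\varepsilon)^{2N}-1)\beta^{N}(\varphi)=O(\varepsilon^{2})$, and the delicate $j=0$ contribution $(\psi_\varepsilon-1)\,x^{m}(x^{-1}D)^{k}(x^{-2N}\varphi)$. For the latter I split the supremum at a threshold $A>0$: on $(0,A]$ the bound $|\psi_\varepsilon-1|\le a_\varepsilon A^{2}$ together with $|x^{m}(x^{-1}D)^{k}(x^{-2N}\varphi)|\le\beta^{N}(\varphi)$ gives a contribution of order $\varepsilon^{2}A^{2}\beta^{N}(\varphi)$, while on $[A,\infty)$ the trivial bound $|\psi_\varepsilon-1|\le 2$ combined with the rapid decay at infinity of each continuous function $x\mapsto x^{m}(x^{-1}D)^{k}(x^{-2N}\varphi(x))$—which in turn follows from the finiteness of higher $\beta^{N'}(\varphi)$—forces $\max_{m,k\le 2N}\sup_{x\ge A}|x^{m}(x^{-1}D)^{k}(x^{-2N}\varphi)|\to 0$ as $A\to\infty$. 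Taking first $A$ large and then $\varepsilon$ small drives the whole $\beta^{N}$-seminorm below any prescribed $\delta$; the $\gamma^{2N+1/2}_{m,k}$ piece of $\beta^{N}$ is treated identically with $x^{-2N-1}$ in place of $x^{-2N}$.

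The main obstacle is the $j=0$ term in part $(b)$: the pointwise factor $a_\varepsilon x^{2}$ in $|\psi_\varepsilon(x)-1|$ cannot be absorbed by a single defining seminorm of $\mathcal K_{-1/2}(\R_+)$, and simultaneous use of decay at infinity is required—exactly the feature the Montel construction of Section \ref{Sec prostor K} was designed to supply, and which already drove the proof of Proposition \ref{propmont}.
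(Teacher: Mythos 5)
Your argument is correct and, in part $(a)$, coincides with the paper's (the paper also uses that $x^{-1}D$ sends $e^{ia_\varepsilon x^2}$ to a constant multiple of itself and that multiplication by $x$ is harmless). In part $(b)$ you and the paper start with the same reduction via \eqref{cous} to showing $\beta^{N}\bigl((\psi_\varepsilon-1)\varphi\bigr)\to 0$, but from there the routes genuinely diverge: the paper merely observes that every term produced by the Leibniz expansion carries a factor $(\psi_\varepsilon-1)$ or a derivative of it and declares the conclusion, whereas you isolate the one term for which this observation is insufficient — the $j=0$ term, where $|\psi_\varepsilon(x)-1|\le\min(2,|a_\varepsilon|x^2)$ is \emph{not} uniformly small on $\RR_+$ — and close the gap by splitting the supremum at a threshold $A$, using $|a_\varepsilon|A^{2}=O(\varepsilon^{2})$ on $(0,A]$ and decay at infinity on $[A,\infty)$. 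This is exactly the missing uniformity argument, and it is the real content of the lemma; your version is the more complete proof. Two small points to tighten: since $c_1=\cot\alpha$ may be negative, write $(2|a_\varepsilon|)^{j}$ for $|(x^{-1}D)^{j}\psi_\varepsilon|$; and the decay statement
\begin{equation*}
\max_{m,k\le 2N}\ \sup_{x\ge A}\bigl|x^{m}(x^{-1}D)^{k}(x^{-2N}\varphi)\bigr|\longrightarrow 0\quad (A\to\infty)
\end{equation*}
should not be left as a consequence of "finiteness of higher $\beta^{N'}$" in the raw sense, because the seminorms $\beta^{N'}$ cap the admissible power of $x$ at $2N'$ in lockstep with the weight $x^{-2N'}$; you need the recursion \eqref{trecidan} (equivalently the paper's own estimate \eqref{(2)} in the proof of Proposition \ref{propmont}) to trade surplus weight for the extra factor $1/A$. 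Citing that estimate explicitly makes your $[A,\infty)$ step airtight and ties your argument to machinery the paper has already set up.
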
 \label{dodlem}

\begin{proof}
(a) Let
$\Phi(x)=e^{ic_1 (\frac{\varepsilon x}{c_2})^2/2}\varphi(x) $. The action of operator $(x^{-1}D)$ over $\Phi$, with suitable $C$, gives
$$ (x^{-1}D)(e^{ic_1 (\frac{\varepsilon x}{c_2})^2/2}\varphi(x))=C\varepsilon (e^{ic_1(\frac{\varepsilon x}{c_2})^2/2})\varphi(x)+ (e^{ic_1(\frac{\varepsilon x}{c_2})^2/2})(x^{-1}D)\varphi(x).
$$
This implies that the action of $(x^{-1}D)$ lives $\Phi$ in $\mathcal K_{-1/2}(\R_+).$ It is clear that the multiplication by $x$ leaves $\Phi$ in $\mathcal K_{-1/2}(\R_+)$, i.e., $x\Phi(x)\in\mathcal K_{-1/2}(\R_+)$. This proves (a).

\noindent (b) By (\ref{cous}),
\begin{equation}\label{dodbeta}|\langle g(x),(e^{ic_1 (\frac{\varepsilon x}{c_2})^2/2}-1)\varphi(x)\rangle|\leq
C\beta^{\mu_0}((e^{ic_1 (\frac{\varepsilon x}{c_2})^2/2}-1)\varphi(x)) \;\mbox{ for some } \; C>0 \mbox{ and }\; \mu_0\in\N_0.
\end{equation}
In the calculation of the norm $\beta^{\mu_0}$, each term in the sum includes a multiplicative factor
$(e^{ic_1 (\frac{\varepsilon x}{c_2})^2/2}-1)'=C_1(\varepsilon x)
(e^{ic_1 (\frac{\varepsilon x}{c_2})^2/2})$, $C_1$ is a constant, or  $(e^{ic_1 (\frac{\varepsilon x}{c_2})^2/2}-1)$. This implies that the right hand side of (\ref{dodbeta}) tends to zero as
$\varepsilon\rightarrow 0.$ This finishes the proof of part (b).
\end{proof}

\begin{theorem}\label {te0}
Let $f\in \mathcal{K}_{-1/2}'(\R^+)$. Assume that $f$ has the quasi-asymptotic behavior of degree $m\in\RR$ at origin with respect to $L$ in  $\mathcal K_{-1/2}'(\RR_+)$, i.e.,
\begin{equation}\label{newq}
f(\varepsilon x)\sim {\varepsilon^mL(\varepsilon)} u(x) \ \ \text{ as } \ \ \varepsilon\rightarrow0^+.
\end{equation}
Then, with fixed $\mu_0\in\N_0,$
\begin{equation}\label{1newq}
{e^{ic_1 (\frac{\xi}{\varepsilon})^2/2} H_{\mu_0}^{\alpha} f \Big(\frac{\xi}{\varepsilon}\Big)}\sim \frac{C_{\alpha,\mu_0}}{c_2^{m+1}}{\varepsilon^{m+1}L(\varepsilon)}H_{\mu_0} u(\xi) \; \text{ as } \, \varepsilon\rightarrow0^+ \, \text{ in } \, \mathcal{K}_{-1/2}'(\R_+).
\end{equation}
\end{theorem}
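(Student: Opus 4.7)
My plan is to pair against an arbitrary test function $\varphi\in\mathcal K_{-1/2}(\R_+)$ and reduce the claim to the hypothesis (\ref{newq}) via the factorization (\ref{FrHT veza HT}) together with the self-duality of the Hankel transform on $\mathcal K_{-1/2}(\R_+)$. Multiplying (\ref{FrHT veza HT}) by $e^{ic_1(\xi/\varepsilon)^2/2}$ cancels the FrHT phase and gives
$$e^{ic_1(\xi/\varepsilon)^2/2}H^\alpha_{\mu_0}f(\xi/\varepsilon)=C_{\alpha,\mu_0}\,H_{\mu_0}\bigl(e^{-ic_1 x^2/2}f(x)\bigr)(c_2\xi/\varepsilon).$$
Moving the dilation $\eta=c_2\xi/\varepsilon$ onto the test side (Jacobian $\varepsilon/c_2$), applying the duality $\langle H_{\mu_0}T,\psi\rangle=\langle T,H_{\mu_0}\psi\rangle$ (which is the $\alpha=\pi/2$ case of the definition of $H^\alpha_{\mu_0}$ on $\mathcal K'_{-1/2}(\R_+)$), using $H_{\mu_0}(\varphi(\varepsilon\cdot/c_2))(x)=(c_2/\varepsilon)H_{\mu_0}(\varphi)(xc_2/\varepsilon)$, and finally substituting $x=\varepsilon y/c_2$, the pairing becomes
$$\bigl\langle e^{ic_1(\xi/\varepsilon)^2/2}H^\alpha_{\mu_0}f(\xi/\varepsilon),\varphi(\xi)\bigr\rangle=\frac{C_{\alpha,\mu_0}\varepsilon}{c_2}\bigl\langle f(\varepsilon y/c_2),\,e^{-ic_1(\varepsilon y/c_2)^2/2}H_{\mu_0}(\varphi)(y)\bigr\rangle.$$

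Dividing by $\varepsilon^{m+1}L(\varepsilon)$ and writing $e^{-ic_1(\varepsilon y/c_2)^2/2}=1+(e^{-ic_1(\varepsilon y/c_2)^2/2}-1)$, the ``$1$'' piece is handled directly by the assumed quasi-asymptotic: setting $\varepsilon'=\varepsilon/c_2$ so that $f(\varepsilon y/c_2)=f(\varepsilon' y)$, the identity $\varepsilon^m L(\varepsilon)=c_2^m(\varepsilon')^m L(c_2\varepsilon')$ combined with $L(c_2\varepsilon')/L(\varepsilon')\to 1$ (slowly varying at $0$) and (\ref{newq}) applied with the fixed test function $H_{\mu_0}(\varphi)$ yields
$$\frac{1}{\varepsilon^m L(\varepsilon)}\bigl\langle f(\varepsilon y/c_2),H_{\mu_0}(\varphi)(y)\bigr\rangle\longrightarrow \frac{1}{c_2^m}\langle u(y),H_{\mu_0}(\varphi)(y)\rangle=\frac{1}{c_2^m}\langle H_{\mu_0}u(\xi),\varphi(\xi)\rangle,$$
which after multiplication by $C_{\alpha,\mu_0}/c_2$ produces exactly the coefficient $C_{\alpha,\mu_0}/c_2^{m+1}$ in (\ref{1newq}).

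The main obstacle is the remainder
$$R_\varepsilon=\frac{1}{\varepsilon^m L(\varepsilon)}\bigl\langle f(\varepsilon y/c_2),\,(e^{-ic_1(\varepsilon y/c_2)^2/2}-1)H_{\mu_0}(\varphi)(y)\bigr\rangle,$$
since the test function depends on $\varepsilon$ and Lemma \ref{dodlem}(b) does not apply verbatim. I plan to handle this by an equicontinuity argument. The family $\{f(\varepsilon y/c_2)/(\varepsilon^m L(\varepsilon))\}_{\varepsilon\in(0,1]}$ is weakly convergent in $\mathcal K'_{-1/2}(\R_+)$ by the scaling argument above, hence weakly bounded; since $\mathcal K_{-1/2}(\R_+)$ is Fr\'echet, therefore barreled, the Banach--Steinhaus theorem together with (\ref{cous}) yields constants $C>0$ and $\mu'\in\N_0$ such that
$$\Bigl|\frac{1}{\varepsilon^m L(\varepsilon)}\langle f(\varepsilon y/c_2),\psi(y)\rangle\Bigr|\le C\,\beta^{\mu'}(\psi),\quad \psi\in\mathcal K_{-1/2}(\R_+),$$
uniformly for $\varepsilon$ in a neighbourhood of $0$. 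Applied to $\psi_\varepsilon(y)=(e^{-ic_1(\varepsilon y/c_2)^2/2}-1)H_{\mu_0}(\varphi)(y)$, the task reduces to $\beta^{\mu'}(\psi_\varepsilon)\to 0$, which I establish exactly as in the proof of Lemma \ref{dodlem}(b): a Leibniz expansion of each seminorm $\beta^{\mu'}_{m,k}(\psi_\varepsilon)$ writes every summand as a product in which either a derivative of the phase contributes an explicit power of $\varepsilon$, or the remaining factor $e^{-ic_1(\varepsilon y/c_2)^2/2}-1$ is estimated by splitting the supremum at a large radius $R$ (on $y\le R$ use $|e^{-ic_1(\varepsilon y/c_2)^2/2}-1|\lesssim \varepsilon^2 y^2$, on $y>R$ use the rapid decay of $H_{\mu_0}(\varphi)\in\mathcal K_{-1/2}(\R_+)$). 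Hence $R_\varepsilon\to 0$. Combining the main and remainder pieces yields (\ref{1newq}) in the weak$^\star$ sense for every $\varphi\in\mathcal K_{-1/2}(\R_+)$, and Remark \ref{jaka=slaba konv} upgrades this to the strong convergence in $\mathcal K'_{-1/2}(\R_+)$ required by the statement.
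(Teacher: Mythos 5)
Your proposal is correct and follows essentially the same route as the paper's proof: reduce to the classical Hankel transform via \eqref{FrHT veza HT}, move the dilation to the test side, split $e^{-ic_1(\varepsilon y/c_2)^2/2}=1+(e^{-ic_1(\varepsilon y/c_2)^2/2}-1)$, treat the main term with \eqref{newq} and slow variation, and kill the remainder by pairing a bounded family of distributions against test functions tending to zero in $\mathcal K_{-1/2}(\RR_+)$. The only difference is presentational: you make the uniformity explicit via Banach--Steinhaus on the barrelled space, where the paper invokes boundedness of the set $B$, Lemma 4.1 and the Montel property of $\mathcal K_{-1/2}(\RR_+)$ --- the same mechanism.
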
\label{th1}

\begin{proof}

Let $\varphi\in \mathcal{K}_{-1/2}(\R_+)$. By the definition, it follows

\begin{align*}&
\Big\langle e^{ic_1 (\frac{\xi}{\varepsilon})^2/2} H_{\mu_0}^{\alpha} f \Big(\frac{\xi}{\varepsilon}\Big),\varphi(\xi)\Big\rangle =\varepsilon\langle  H_{\mu_0}^{\alpha} f (\xi),e^{ic_1 \xi^2/2}\varphi(\varepsilon\xi)\rangle
=\varepsilon\langle   f (t),H_{\mu_0}^{\alpha}(e^{ic_1 \xi^2/2}\varphi(\varepsilon\xi))(t)\rangle\\
&=\varepsilon C_{\al,\mu_0}\Big\langle   f (t),\int_0^\infty e^{ic_1 \xi^2/2}\varphi(\varepsilon\xi)e^{-ic_1\frac{\xi^2+t^2}{2}}\sqrt{t\xi c_2}J_{\mu_0}(t\xi c_2)d\xi\Big\rangle\\
&=\varepsilon C_{\al,\mu_0}\Big\langle   f (t)e^{-ic_1 t^2/2},\int_{0}^\infty \varphi(\varepsilon\xi)\sqrt{t\xi c_2}J_{\mu_0}(t\xi c_2)d\xi\Big\rangle=\varepsilon C_{\al,\mu_0}\langle   f (\varepsilon x)e^{-ic_1 (\varepsilon x)^2/2},H_{\mu_0} \varphi (c_2 x)\rangle.
\end{align*}
Then,
\begin{align*}
\notag
&\Big\langle\frac{e^{ic_1 (\frac{\xi}{\varepsilon})^2/2} H_{\mu_0}^{\alpha} f (\frac{\xi}{\varepsilon})}{\varepsilon^{m+1}L(\varepsilon)},\varphi(\xi)\Big\rangle=
C_{\al,\mu_0}\Big\langle   \frac{f (\varepsilon x)e^{-ic_1 (\varepsilon x)^2/2}}{\varepsilon^m L(\varepsilon)},H_{\mu_0} \varphi (c_2 x)\Big\rangle\\&=\frac{C_{\al,\mu_0}}{c_2}\Big\langle   \frac{f (\frac{\varepsilon x}{c_2})e^{-ic_1 (\frac{\varepsilon x}{c_2})^2/2}}{\varepsilon^m L(\varepsilon)},H_{\mu_0} \varphi (x)\Big\rangle=\frac{C_{\al,\mu_0}}{c_2^{m+1}}\Big\langle   \frac{f (\frac{\varepsilon x}{c_2})e^{-ic_1 (\frac{\varepsilon x}{c_2})^2/2}}{(\frac{\varepsilon}{c_2})^m L(\frac{\varepsilon}{c_2})\frac{L(\varepsilon)}{L(\frac{\varepsilon}{c_2})}},H_{\mu_0} \varphi (x)\Big\rangle .
\end{align*}
By (\ref{newq}), the set
\begin{equation}\label{bset}\big\{  \frac{f(\frac{t x}{c_2})e^{-ic_1 (\frac{t x}{c_2})^2/2}}{(\frac{t}{c_2})^m L(\frac{t}{c_2})\frac{L(t)}{L(\frac{t}{c_2})}}: \ t\in(0,t_0)\big\}
\end{equation}
is bounded in $\mathcal K'_{-1/2}(\R_+)$ for enough small $t_0\in(0,1)$. Since
$\{\frac{L(t)}{L(\frac{t}{c_2})}: \ t\in(0,t_1)\}, (t_1<1)$, is
 a bounded set in $\R_+),$ we exclude this quotient from (\ref{bset}) and denote  by $B$ the corresponding bounded set (maybe with the smaller $t_0$) in $\mathcal K'_{-1/2}(\R_+):$
$$B=\big\{\frac{f(\frac{t x}{c_2})}{(\frac{t}{c_2})^m}: \ t\in(0,t_0)\big\}.
$$
\noindent The application of the inversion formula \eqref{inverse} (see also \eqref{FrHT veza HT}), together with the estimate \eqref{limL1}, yields

\begin{align}\label{lim}
& & \lim_{\varepsilon\rightarrow0^+}\Big\langle\frac{e^{ic_1 (\frac{\xi}{\varepsilon})^2/2} H_{\mu_0}^{\alpha} f (\frac{\xi}{\varepsilon})}{\varepsilon^{m+1}L(\varepsilon)},\varphi(\xi)\Big\rangle
 =  \frac{C_{\al,\mu_0}}{c_2^{m+1}}\lim_{\varepsilon\rightarrow0^+}\Big\langle   \frac{f (\frac{\varepsilon x}{c_2})e^{-ic_1 (\frac{\varepsilon x}{c_2})^2/2}}{(\frac{\varepsilon}{c_2})^m L(\frac{\varepsilon}{c_2})},H_{\mu_0} \varphi (x)\Big\rangle \nonumber \\
 &=& \frac{C_{\al,\mu_0}}{c_2^{m+1}}\lim_{\varepsilon\rightarrow0^+}\Big\langle   \frac{f (\frac{\varepsilon x}{c_2})}{(\frac{\varepsilon}{c_2})^m L(\frac{\varepsilon}{c_2})},H_{\mu_0} \varphi (x)\Big\rangle+\frac{C_{\al,\mu_0}}{c_2^{m+1}}\lim_{\varepsilon\rightarrow0^+}\Big\langle   \frac{f (\frac{\varepsilon x}{c_2})\big(e^{-ic_1 (\frac{\varepsilon x}{c_2})^2/2}-1\big)}{(\frac{\varepsilon}{c_2})^m L(\frac{\varepsilon}{c_2})},H_{\mu_0} \varphi (x)\Big\rangle.
\end{align}
%
%
\noindent Now we use the fact that $\mathcal K_{-1/2}(\R_+)$ is a Montel space, so strong and weak convergence in $\mathcal{K}_{-1/2}(\RR_+)$ are equivalent (cf. Remark \ref{jaka=slaba konv}). Recall, that
 $H_{\mu_0}\varphi\in\mathcal K_{-1/2}(\R_+)$. Therefore, by Lemma \ref{dodlem}, we have that for every $g\in\mathcal K'_{-1/2}(\R_+)$,  the following holds
$$
\sup_{g\in B}\langle g(x),(e^{-ic_1 (\frac{\varepsilon x}{c_2})^2/2}-1)
H_{\mu_0}\varphi(x)\rangle\rightarrow 0, \mbox{ as } \;\varepsilon\rightarrow 0.
$$
If we put $t\in(0,\varepsilon)$ in $B$, where $\varepsilon<t_0$, we obtain
$$\lim_{\varepsilon\rightarrow0^+}
\langle \frac{f(\frac{\varepsilon x}{c_2})}{(\frac{\varepsilon}{c_2})^m},(e^{-ic_1 (\frac{\varepsilon x}{c_2})^2/2}-1) H_\mu\varphi(x)\rangle=0.$$
So, we conclude that the second term on the right-hand side of (\ref{lim}) is equal to zero.

\noindent Thus, \eqref{newq} gives

\begin{eqnarray*}
& & \lim_{\varepsilon\rightarrow0^+}\Big\langle\frac{e^{ic_1 (\frac{\xi}{\varepsilon})^2/2} H_{\mu_0}^{\alpha} f (\frac{\xi}{\varepsilon})}{\varepsilon^{m+1}L(\varepsilon)},\varphi(\xi)\Big\rangle=
\frac{C_{\alpha,\mu_0}}{c_2^{m+1}}\lim_{\varepsilon\rightarrow0^+}\Big\langle\frac{f(\varepsilon x)}{\varepsilon^mL(\varepsilon)},H_{\mu_0} \varphi(x)\Big\rangle\\
& = &\frac{C_{\alpha,\mu_0}}{c_2^{m+1}}\langle u(x),H_{\mu_0} \varphi (x)\rangle=\frac{C_{\alpha,\mu_0}}{c_2^{m+1}}\langle H_{\mu_0} u(\xi), \varphi (\xi)\rangle.
\end{eqnarray*}
%
%
This proves the theorem.
\end{proof}

Before we state the Tauberian theorem for the FrHT, in a manner similar to \cite[Section 2]{Zem2}, we define the space $\mathcal B_{\mu,b}(\RR_+), \ \mu\in\N_0.$ Let $b$ be a positive real number. The space $\mathcal B_{\mu,b}(\RR_+)$ consists of all smooth complex-valued functions $\psi(x), \  x\in\R_+$, such that $\psi(x)=0, b<x<\infty$, and both $(x^{-1}D)^k x^{-2\mu}\psi(x)$ and $(x^{-1}D)^k x^{-2\mu-1}\psi(x)$  are bounded on $\R_+$. The topology of $\mathcal B_{\mu,b}(\RR_+)$ is generated by the  seminorms
\begin{equation}\label{nsw1}
\beta^\mu_{0}(\psi)=\sup_{ 0\leq k\leq 2\mu}\beta_{0,k}^\mu (\psi),
\mbox{
where }\; \beta_{0,k}^\mu(\psi)=\gamma_{0,k}^{2\mu-1/2}(\psi)+\gamma_{0,k}^{2\mu+1/2}(\psi) \;(m=0).
\end{equation}
\noindent It should be noted that for $b<c$, $\mathcal B_{\mu,b}(\RR_+)\subset \mathcal B_{\mu,c}(\RR_+)$.
For any fixed $\mu\in\N_0$, $\mathcal B^{\mu}(\RR_+)$ denotes the strict inductive limit of $\mathcal B_{\mu,b}(\RR_+)$, where $b$ traverses a monotonically increasing sequence of positive numbers and tends to infinity. Both $\mathcal B_{\mu,b}(\RR_+)$ and $\mathcal B^{\mu}(\RR_+)$ are  complete, see \cite{Zem2}  (cf. \cite[Lemma 2]{Zem1}).
Let
$\mathcal B_{-1/2}(\RR_+)=\bigcap_{\mu\in\N_0}\mathcal B^{\mu}(\RR_+).$
We endow this space with the corresponding projective topology.
The proof of the next lemma is based on  the  proof that $\mathcal B^{\mu}(\RR_+)$ is everywhere dense in $\mathcal K^{\mu}(\RR_+)$.
\begin{lemma}
$\mathcal B_{-1/2}(\RR_+)$ is everywhere dense in $\mathcal K_{-1/2}(\RR_+)$.
\end{lemma}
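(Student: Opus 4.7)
The plan is to prove density by a smooth tail truncation. Fix a cut-off $\eta\in C^\infty(\R)$ with $\eta\equiv 1$ on $(-\infty,1]$ and $\eta\equiv 0$ on $[2,\infty)$, and for each $\varphi\in\mathcal{K}_{-1/2}(\R_+)$ set $\varphi_n(x):=\eta(x/n)\varphi(x)$. I will show that $\varphi_n\in\mathcal{B}_{-1/2}(\R_+)$ for every $n\in\N$, and that $\beta^\mu(\varphi-\varphi_n)\to 0$ as $n\to\infty$ for every $\mu\in\N_0$, which yields the density.

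For the membership, $\supp\varphi_n\subseteq(0,2n]$, so the support condition defining $\mathcal{B}_{-1/2}(\R_+)$ is met with $b=2n$. Since $(x^{-1}D)$ is a derivation, a Leibniz expansion of $(x^{-1}D)^k\bigl(x^{-2\mu-r}\eta(x/n)\varphi(x)\bigr)$ for $r\in\{0,1\}$ shows $\beta^\mu_0(\varphi_n)<\infty$ for every $\mu\in\N_0$: each summand is a product of $(x^{-1}D)^j\eta(x/n)$ (bounded for fixed $n$) and $(x^{-1}D)^{k-j}(x^{-2\mu-r}\varphi)$ (bounded since $\varphi\in\mathcal{K}_{-1/2}(\R_+)$).

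For the convergence, write $\varphi-\varphi_n=(1-\eta(x/n))\varphi$. Applying the same Leibniz rule,
\begin{equation*}
(x^{-1}D)^k\bigl(x^{-2\mu-r}(1-\eta(x/n))\varphi(x)\bigr)=\sum_{j=0}^{k}\binom{k}{j}(x^{-1}D)^j(1-\eta(x/n))\cdot(x^{-1}D)^{k-j}\bigl(x^{-2\mu-r}\varphi(x)\bigr)
\end{equation*}
for $r\in\{0,1\}$. The $j=0$ term is supported in $\{x\geq n\}$, where $1\leq x/n$, and therefore
\begin{equation*}
\sup_{x}x^m\bigl|(1-\eta(x/n))(x^{-1}D)^k\bigl(x^{-2\mu-r}\varphi\bigr)\bigr|\leq\frac{1}{n}\,\gamma^{2\mu-1/2+r}_{m+1,k}(\varphi)=O(n^{-1}),
\end{equation*}
the $\gamma$-seminorm being finite on the fixed $\varphi$ by iteration of the recurrence (\ref{trecidan}). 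For $j\geq 1$, an induction yields $(x^{-1}D)^j\eta(x/n)=\sum_{l=1}^{j}c_{j,l}\,x^{-(2j-l)}n^{-l}\eta^{(l)}(x/n)$, supported in $[n,2n]$ and of magnitude $O(n^{-2j})$ there; combined with $x\leq 2n$ on this set and the bound $|(x^{-1}D)^{k-j}(x^{-2\mu-r}\varphi)|\leq x^{-m}\gamma^{2\mu-1/2+r}_{m,k-j}(\varphi)$, the $j$-th contribution is $O(n^{-2})$. Summing over $j=0,\dots,k$ gives $\beta^\mu_{m,k}(\varphi-\varphi_n)=O(n^{-1})$ uniformly in $0\leq m,k\leq 2\mu$, so $\beta^\mu(\varphi-\varphi_n)\to 0$ for every $\mu\in\N_0$.

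The main point requiring care is the inductive identity for $(x^{-1}D)^j\eta(x/n)$ and the parallel bookkeeping of the two parities $r\in\{0,1\}$ packaged into $\beta^\mu_{m,k}$; once these are carried out, the entire argument reduces to the trade $x\geq n\Longrightarrow x^m\leq n^{-1}x^{m+1}$, which, together with the finiteness of each $\gamma^{\nu}_{\cdot,\cdot}(\varphi)$ on the fixed $\varphi$, produces an arbitrarily small remainder in every defining seminorm of $\mathcal{K}_{-1/2}(\R_+)$.
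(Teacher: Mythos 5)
Your proof is correct and follows essentially the same route as the paper: both truncate the tail of $\varphi$ by a smooth cut-off supported in a bounded set (the paper translates, $\theta(x-r)\varphi$, while you dilate, $\eta(x/n)\varphi$) and verify convergence in every defining seminorm. The only substantive difference is that you carry out the Leibniz/decay estimates explicitly, which the paper omits.
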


\begin{proof}
We utilize the proof of \cite[Lemma 6]{Zem2}.

It is obvious that $\mathcal{B}_{-1/2}(\RR_+)$ is a subspace of $\mathcal{K}_{-1/2}(\RR_+)$. Let $\theta\in\mathcal{K}_{-1/2}(\RR_+)$ be such that
$$ \theta(x) = \left\{\begin{array}{ll}
             1, & \mbox{if}\quad -\infty<x<1\\
             0,  & \mbox{if}\quad 2<x<\infty.
         \end{array}
         \right. $$
Let $\theta_r\varphi=\theta(x-r)\varphi$, $r\in\N_0$ is a sequence in  $\mathcal{B}_{-1/2}(\RR_+)$, whenever $\varphi\in\mathcal{K}_{-1/2}(\RR_+)$. Then, $\theta_r$ converges to $\theta$ in $\mathcal K_{-1/2}(\RR_+)$, as $r\rightarrow\infty.$
\end{proof}

Now, we can formulate the Tauberian theorem for the FrHT over $\mathcal K'_{-1/2}(\R_+)$. Note that its formulation is the same as the corresponding result for the elements of $(\mathcal K^\mu(\R_+))'.$ For that reason, we provide the formulation of the theorem but not its proof. Note that a dense set $\mathcal B_{-1/2}(\RR_+)$ is used in the proof.

\begin{theorem}\label {te1} Assume that $f\in \mathcal{K}_{-1/2}'(\R_+)$ is of slow growth at infinity and locally integrable on $\R_+$, defining a regular element of
$\mathcal{K}_{-1/2}'(\R_+)$. Let $\mu_0\in\N_0$, and suppose further that:
\begin{enumerate}
\item [(i)] The limit
\begin{equation}\label{limit1}
\lim_{\varepsilon \to 0^{+} }
\frac{e^{ic_1 (\frac{\xi}{\varepsilon})^2/2} H_{\mu_0}^{\alpha} f (\frac{\xi}{\varepsilon})} {\varepsilon ^{m +1} L(\varepsilon
)}=M_{\xi }<\infty,
\end{equation}

\noindent exists for all $\xi\in\R_+$.

\item [(ii)] There exist constants $N>0$, $C=C(\alpha)>0$, and $0<\varepsilon_0\leq 1$ such that
\begin{equation}\label{ogranicuvanje1}
\frac{|e^{ic_1 (\frac{\xi}{\varepsilon})^2/2} H_{\mu_0}^{\alpha} f (\frac{\xi}{\varepsilon})|} {\varepsilon ^{m+1} L(\varepsilon
)}\leq C |\xi|^{N+\mu_0+\frac{1}{2}},
\end{equation}
\noindent for all $\xi\in\R_+$ and $0< \varepsilon \leq
\varepsilon_0$.
\end{enumerate}
Then $f$ has the quasi-asymptotic behavior of degree $m\in\RR$ at $0^+$ in $\mathcal{K}_{-1/2}'(\RR_+)$.

\end{theorem}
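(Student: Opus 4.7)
The plan is to invert the identity from Theorem \ref{th1} so as to express pairings of $f(\varepsilon x)/(\varepsilon^m L(\varepsilon))$ against test functions in terms of pairings of $e^{ic_1(\xi/\varepsilon)^2/2} H^\alpha_{\mu_0}f(\xi/\varepsilon)/(\varepsilon^{m+1}L(\varepsilon))$ against other test functions, then to apply Lebesgue's dominated convergence on a dense subset (via $\mathcal{B}_{-1/2}(\RR_+)$) using hypotheses (i) and (ii), and finally to extend to all of $\mathcal{K}_{-1/2}(\RR_+)$ by Banach--Steinhaus.

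Concretely, rewriting the identity obtained inside the proof of Theorem \ref{th1} as
\[
\Big\langle\frac{f(\varepsilon x)e^{-ic_1(\varepsilon x)^2/2}}{\varepsilon^m L(\varepsilon)}, H_{\mu_0}\varphi(c_2 x)\Big\rangle = \frac{1}{C_{\alpha,\mu_0}}\Big\langle\frac{e^{ic_1(\xi/\varepsilon)^2/2}H_{\mu_0}^\alpha f(\xi/\varepsilon)}{\varepsilon^{m+1}L(\varepsilon)},\varphi(\xi)\Big\rangle,
\]
and noting that $\varphi\mapsto H_{\mu_0}\varphi(c_2\,\cdot)$ is a topological automorphism of $\mathcal{K}_{-1/2}(\RR_+)$ (the classical Hankel transform being the FrHT at $\alpha=\pi/2$), every $\psi\in\mathcal{K}_{-1/2}(\RR_+)$ may be represented as $\psi = H_{\mu_0}\varphi(c_2\,\cdot)$ for a unique $\varphi$. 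Since $f$ is locally integrable and hypothesis (ii) regularizes $H_{\mu_0}^\alpha f$, the right-hand side is an honest Lebesgue integral. Hypothesis (i) supplies pointwise convergence of the integrand to $M_\xi\varphi(\xi)$, while (ii) supplies the majorant $C\xi^{N+\mu_0+\frac{1}{2}}|\varphi(\xi)|$, which is integrable because elements of $\mathcal{K}_{-1/2}(\RR_+)$ vanish of arbitrarily high order at $0^+$ and decay rapidly at infinity. Dominated convergence then delivers
\[
\lim_{\varepsilon\to 0^+}\Big\langle\frac{f(\varepsilon x)e^{-ic_1(\varepsilon x)^2/2}}{\varepsilon^m L(\varepsilon)},\psi(x)\Big\rangle =: T(\psi), \qquad \psi\in\mathcal{K}_{-1/2}(\RR_+).
\]

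To remove the auxiliary exponential one writes $\langle f(\varepsilon x)/(\varepsilon^m L(\varepsilon)),\psi(x)\rangle = \langle f(\varepsilon x)e^{-ic_1(\varepsilon x)^2/2}/(\varepsilon^m L(\varepsilon)),e^{ic_1(\varepsilon x)^2/2}\psi(x)\rangle$. For $\psi\in\mathcal{B}_{-1/2}(\RR_+)$, which has compact support, the sign-reversed version of Lemma \ref{dodlem} shows that $e^{ic_1(\varepsilon x)^2/2}\psi(x)\to \psi(x)$ in $\mathcal{K}_{-1/2}(\RR_+)$ as $\varepsilon\to 0^+$. Combined with the uniform boundedness of $\{f(\varepsilon x)/(\varepsilon^m L(\varepsilon))\}$ in $\mathcal{K}'_{-1/2}(\RR_+)$ coming from (ii) and the identity above, this yields convergence to $T(\psi)$ along the dense subspace $\mathcal{B}_{-1/2}(\RR_+)$. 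Since $\mathcal{K}_{-1/2}(\RR_+)$ is Fréchet and therefore barrelled, Banach--Steinhaus upgrades pointwise convergence on a dense set together with uniform boundedness to convergence throughout $\mathcal{K}_{-1/2}(\RR_+)$, producing the quasi-asymptotic limit $u := T$.

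The main obstacle I anticipate is controlling, uniformly in $\varepsilon$, the interplay between the polynomial growth of the majorant $\xi^{N+\mu_0+1/2}$ from (ii) and the regularity of $\varphi = H_{\mu_0}^{-1}(\psi(\,\cdot\,/c_2))$ near the origin and at infinity: one must invoke the continuity of the classical Hankel transform on $\mathcal{K}_{-1/2}(\RR_+)$ to estimate $\varphi$ in terms of a $\beta^\mu$-seminorm of $\psi$ with $\mu$ large enough to absorb $N$, and then re-deploy this estimate both to justify the Fubini interchanges and to verify the uniform boundedness step used in the Banach--Steinhaus conclusion. This is precisely where the construction of $\mathcal{K}_{-1/2}(\RR_+)$ as an intersection of \emph{two} scales of Zemanian norms (one indexed by $2\mu-1/2$, one by $2\mu+1/2$) becomes indispensable, matching the alternating roles of the indices $\mu_0$ and $\mu_0+\tfrac12$ appearing in the growth exponent of (ii).
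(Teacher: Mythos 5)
The paper does not actually prove Theorem \ref{te1}: it explicitly states that only the formulation is given, the proof being the same as for the corresponding result over $(\mathcal K^{\mu}(\R_+))'$, with the sole hint that the density of $\mathcal B_{-1/2}(\R_+)$ in $\mathcal K_{-1/2}(\R_+)$ is used. Your argument supplies exactly the ingredients that omitted proof is meant to contain --- the duality identity extracted from the proof of the Abelian theorem, dominated convergence driven by hypotheses (i) and (ii), removal of the chirp factor via (the analogue of) Lemma \ref{dodlem}, and the Banach--Steinhaus plus density step on $\mathcal B_{-1/2}(\R_+)$ --- so it is correct modulo two facts you inherit from the paper without proof: that $H_{\mu_0}$ is a topological bijection of $\mathcal K_{-1/2}(\R_+)$ onto itself (the paper's Theorem 3.1 asserts surjectivity but does not argue it), and that elements of $\mathcal K_{-1/2}(\R_+)$ decay rapidly at infinity so that $\xi^{N+\mu_0+1/2}\varphi(\xi)$ is integrable, which the paper asserts but which is not transparent from the seminorms $\beta^{\mu}$ as literally written (the constraint $m\le 2\mu$ against the weight $x^{-2\mu}$ only yields boundedness at infinity).
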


\section{Declaration}
All authors contributed equally to this work, and have given a consent to participate and consent for publication.
No funding was received.

\subsection{Conflict of interest}
All authors declare that they have no conflicts of interest.

\subsection{Data availability}
The authors confirm that the data supporting the findings of this study are available within the article.

\subsection{Acknowledgment}

The first and the fourth authors are supported by the Serbian Academy of Sciences and Arts, project F-10; The second author is supported by the Science Fund of the Republic of
Serbia, $\#$GRANT No. 2727, {\it Global and local analysis of operators and
distributions} - GOALS and by the Ministry of Science, Technological Development and Innovation of the Republic of Serbia (Grants No. 451-03-137/2025-03/ 200169); The first and the third authors are supported by the Republic of Srpska Ministry for Scientific and Technological Development and Higher Education, project \textit{Modern methods of time-frequency analysis and fixed point
theory in spaces of generalized functions with applications}, No. 19.032/431-1-22/23.




\begin{thebibliography}{99}
%

\bibitem{AJMP}   Atanasova, S., Jaksi\'{c}, S., Maksimovi\'{c}, S., Pilipovi\'{c}, S., {Abelian and Tauberian Results for the Fractional Hankel Transform of Generalized Functions}, preprint.

\bibitem{Betancor} Betancor J.J., On Hankel transformable distribution spaces, Publications de L'institut mathematique, 65(79), 123--141 (1999).

\bibitem{Betancor1} Betancor  J.J.  and  Rodriquez-Mesa L., Hankel convolution on distributions spaces with exponential growth, Studia Mathematica 121(1) (1996), 35--52

\bibitem{Hankel1} Bingham N. H., A Tauberian theorem for integral transforms of Hankel type, J. London Math. Soc. (2) 5 (1972).

\bibitem{Gudadhe} Gudadhe A.S., Taywade R.D., Mahalle V.N., Initial and Final Value Theorem on Fractional Hankel Transform, IOSR Journal of Mathematics, 5(1) (2013), 36--39

\bibitem{horvat} Horv\'{a}th J., Topological Vector Spaces and Distributions, Addison-Wesley, 1966.






  \bibitem{5} Kerr, F.H.: Fractional powers of Hankel transforms in the Zemanian spaces. J. Math. Anal. Appl. 166,
65--83 (1992).

   \bibitem{55}  Kerr, F. H., A fractional power theory for Hankel transforms in $L^2(R+)$, J. Math. Anal. Appl. 158 (1991), 114-123.




\bibitem{7}
 Pilipovi\'{c}, S.,  Stankovic B., and  Vindas J.,
 \newblock {\em	 Asymptotic behavior of generalized functions,}
 \newblock World Scientific Publishing  Co.  Pte. Ltd., Hackensack, NJ, 2012.
\bibitem{Pras} Prasad, A., Singh, V.K. The fractional Hankel transform of certain tempered distributions and pseudo-differential operators. Ann Univ Ferrara 59, 141--158 (2013).

\bibitem{Ridenhour} Ridenhour, J. R; Soni, R. P.,  Parseval Relation and Tauberian Theorems for the Hankel Transform, SIAM Journal on Mathematical Analysis; Philadelphia  5(5), (1974), 612--628.

\bibitem{She} Sheppard, C.J.R., Larkin, K.G., Similarity theorems for fractional Fourier transforms and fractional
Hankel transforms. Opt. Commun. 154, 173--178 (1998)

\bibitem{Vindas1}
Vindas, J., Structural Theorems for Quasiasymptotics of
Distributions at Infinity, Pub. Inst. Math. Beograd, N.S.,
84(98)(2008), 159--174.


\bibitem{Vindas2}
Vindas, J., Pilipovi\'{c}, S., Structural theorems for
quasiasymptotics of distributions at the origin, Math. Nachr.
282(2.11)(2009), 1584--1599.


 \bibitem{VDZ} Vladimirov, V.S., Drozinov, Ju., Zavialov, B. I., \textit{Tauberian Theorems for Generalized Functions} Dordrecht: Kluwer Academic Publishers Group, 1988.
\bibitem{Zem}  Zemanian, A.H., Generalized Integral Transformations. Interscience Publishers, New York (1968)

\bibitem{Zem1} Zemanian, A.H., A distributional Hankel transformation, SIAM J. Appl. Math. 14 (1966), 561--576.

\bibitem{Zem2} Zemanian, A.H., The Hankel transformation of certain distributions of rapid growth, SIAM J. Appl. Math. 14 (1966), 678--690.

%
%


\end{thebibliography}
\end{document}